\def\ch{{\rm ch}}
\def\sh{{\rm sh}}
\newtheorem{theorem}{\large Theorem}
\newtheorem{corollary}{\large Corollary}
\newtheorem{lemma}{\large Lemma}
\begin{document}

\title
{Exit problems for oscillating compound  Poisson process}

\date{ }

\author
{ Tetyana  Kadankova
\thanks{Vrije  Universiteit Brussel, Department of Mathematics, Building G,
Brussels, Belgium, \newline ,\: e-mail: tetyana.kadankova@vub.ac.be}}

\maketitle

\noindent {\bf  Key words:} oscillating process, scale function, exit from interval

\noindent{\bf Running head:} Exit problems for oscillating  compound  Poisson process
\\ {\bf 2000 Mathematics subject classification: } 60G40;
60K20

\begin{abstract}

In this article we  determine the Laplace transforms of the main boundary 
functionals of the oscillating compound Poisson process. These are the first 
passage time of the level, the joint distribution of the first exit time
from the interval and the value of the overshoot through the boundary.  
In  case  when  $\bold E\xi_{i}(1)=0, $ 
$ \sigma_{i}^{2}=\bold E\xi_{i}(1)^{2}<\infty,$ $ i=1,2, $  we prove the  
limit results for the mentioned functionals.

\end{abstract}

{\bf\Large  Introduction}

\par\bigskip

Oscillating random  walks  with two  switching levels  were considered  in
\cite{lotov 1996},\cite{lotov 2003},\cite{lotov 2004}. The authors derived the Laplace-Stieltjes
transforms of the distributions of the random walks in transient and stationary
regimes. In addition, the asymptotic analysis of the stationary distribution
was  performed.

This  article    studies  the  so-called    one- and two-sided  exit  problems  for    an oscillating  compound Poisson process.
More specifically,  we determine the Laplace transforms   of the following    boundary  characteristics, These  are   the
first passage time of  a  boundary  and    the joint distribution of the first exit
time  from an  interval and the value of the overshoot at this instant. 
 The  obtained results are  given in closed  form, namely  in terms of the  functions involving the scale  functions  of the  auxiliary processes $ \xi_{i}(t),$ $ i=1,2$ (see  below  for  a definition).
The  motivation   of this study    stems  from the  fact that  these processes are   used as governing   processes  for    certain  oscillating   queueing systems.  Examples of such
systems are queueing models in which  service speed or customer arrival rate
change depending on the workload level, and dam models in which the release
rate depends on the buffer content (see  \cite{Pacheco} and references therein).
To  solve   the  two-sided exit  problem, we used  a  probabilistic  
approach borrowed   from  \cite{2Ka2}. 

The rest of the article is structured as follows.
In Section  1  we introduce the process and  determine boundary
characteristics of the auxiliary processes.  Section 2  deals with  the one-boundary
characteristics of the oscillating process. In Section 3 we determine the joint
distribution of the first exit time from the interval and the value of the
overshoot. The  asymptotic results under the conditions that  $\bold E\xi_{i}(1)=0, $
$\sigma_{i}^{2}=\bold E\xi_{i}(1)^{2}<\infty,$ $ i=1,2 $  are given in Section 4.

\section{Preliminaries}

In this section we introduce the process of interest and  the auxiliary processes. 
Further, we will determine  the Laplace transforms of the  first passage time 
and the  first  exit time  for  the auxiliary processes. These results will be 
used to solve  a two-sided problem  for the oscillating  compound Poisson  
process. 

Let
$\{\xi_{i}(t); \: t \ge 0\}, $ $ i=1,2 $
be  real-valued semi-continuous  from below compound Poisson  processes:
$$
   \xi_{i}(t)= \sum\limits_{k=0}^{N(t)}\xi^{i}_k- a_it, \quad   t \ge 0,  
\quad  i=1,2
$$
where  $\xi^{i}_0 =0,$ $\xi^{i}_k\sim \xi^{i}>0 $  are independent identically 
distributed variables with distribution function $F_i(x);$ 
$ \{N_i(t); t \ge 0\}, \quad  N_i(0)=0$  is an ordinary   Poisson  process with   
parameter $\lambda_{i}$  independent from $ \{\xi^i_k; \: k \ge 0\},  $ and  
$a_i >0$ is a drift coefficient.
Their Laplace transforms are then of the following form
$ \bold E e^{-z\xi_{i}(t)}=e^{tk_{i}(z)},$  where
$$
    k_{i}(z)=a_{i}z+\lambda_{i}\int_{0}^{\infty}\left(e^{-xz}-1\right)dF_{i}(x),
    \qquad \Re(z)=0.
$$
We now introduce  the  one-boundary  characteristics of the processes.
Denote  by
$$
   \tau_{i}^{-}(x)=\inf\{t:\xi_{i}(t)\le-x\},  \qquad x\ge 0
$$
the   first passage time  of the lower level  $-x,$  and by
$$
  \tau_{i}^{+}(x)=\inf\{t:\xi_{i}(t)>x\},
  \quad T_{i}^{+}(x)=\xi_{i}(\tau_{i}^{+}(x))-x
$$
the  first  crossing time of the level $x$ and the value of the overshoot
through this level. We set per definition $ \inf\{\emptyset\}=\infty.$
Note,  that  due  to the fact  that  the process 
$\xi_{i}(t)$ has only positive jumps, the  negative   level  $-x$  is reached 
continuously. Hence,  the value  of the overshoot  is  equal to  zero. 
For a fixed  $ b>0 $  and all $x\in\mathbb R, $ $ t\ge0 $ introduce the process
$ \xi(x,t)\in  \mathbb R,$   $ \xi(x,0)=x$
by means of the following  recurrence relations:
\begin{align}                                          \label{opp1}
&   \xi(x,t)= \left\{
   \begin{array}{l}
    x+\xi_{2}(t), \quad  0\le t< \tau_{2}^{-}(x-b), \\
    \xi(b,t-\tau_{2}^{-}(x-b)),
    \quad t\ge \tau_{2}^{-}(x-b),
   \end{array}
   \right. \qquad x>b,
\end{align}
\begin{align*}
&   \xi(x,t)= \left\{
   \begin{array}{l}
    x+\xi_{1}(t), \quad  0\le t< \tau_{1}^{+}(b-x),\notag \\
    \xi(b+T_{1}^{+}(b-x),t-\tau_{1}^{+}(b-x)),
    \quad t\ge \tau_{1}^{+}(b-x),
   \end{array}
   \right. \qquad x\le b.
\end{align*}
Let us explain how the process evolves. Observe, that  $ b $ is a switching
point of the process  $\xi(x,t), $ $t\ge0. $ If $\xi(x,t_{0}) >b,$  then the
increments of the process  coincide  with the increments of the process $ \xi_{2}(t-t_{0})$  up to the first passage of $b. $
If $\xi(x,t_{0}) \le b,$ then the increments of the process coincide  with the increments of the process
$ \xi_{1}(t-t_{0})$ up to the first passage of $ b. $ 

To derive  the  Laplace transforms of the  one-boundary characteristics of the
processes  $\xi_{i}(t),  $  we will need  notion of  a  resolvent of a compound Poisson  process.  
Introduce the resolvents $ R_{i}^{s}(x),$ $x\ge0 $  \cite{SuSh} of the processes
$ \xi_{i}(t), $  $ t\ge0, $  by means of their Laplace transforms:
$$
    \int_{0}^{\infty}e^{-xz}R_{i}^{s}(x)\,dx = (k_{i}(z)-s)^{-1},
    \quad \Re(z)>c_{i}(s),\quad R_{i}^{s}(x)=0, \; x<0,
$$
where $ c_{i}(s)>0,$ $s>0$ is the unique root of the equation  $k_{i}(z)-s=0,$
$i=1,2$ in the semi-plane $\Re(z)>0. $ Note, that $ R_{i}^{s}(0)=a_{i}^{-1}>0.$

The  resolvent   defined in  \cite{SuSh}
is called a  scale function in modern literature (see  \cite{Kyprianou2006} 
for  more details).  The  importance  of scale functions as a class with which 
one may express a whole range of fluctuation identities for spectrally  
one-sided L\'{e}vy processes.  Scale   functions are also  an important   
working  tool  in risk insurance, more specifically,  in optimal barrier 
strategies. In the rest of the article we will use  the term  resolvent.

Denote by 
\begin{align*}
   \underline{m}_{i}^{x}(s)=
   \bold E\left[e^{-s\tau_{i}^{-}(x)} \right],\qquad
   \overline{m}_{i}^{x}(z,s)=
   \bold E\left[e^{-s\tau_{i}^{+}(x)-zT_{i}^{+}(x)} \right],\quad \Re(z)\ge0
\end{align*}
the  Laplace transforms of the  first passage  time  of the negative level 
$-x$ and the  joint  distribution of the  first crossing time of the level $x$  
and the value of the overshoot. one-boundary characteristics of the
The  lemma below contains the  expressions  for these  Laplace  transforms.  Observe  that  these  results   are valid  for  
L\'{e}vy processes    whose    Laplace exponent  is given   by  (\ref{opp28}).

\begin{lemma}

For  $s\ge0,$ $i=1,2$ the following
equalities  are  valid:
\begin{align}                                          \label{opp2}
&   \underline{m}_{i}^{x}(s)=e^{-xc_{i}(s)}, \\
&   \overline{m}_{i}^{x}(z,s)=
   e^{xz}-R_{i}^{s}(x)\,\frac{k_{i}(z)-s}{z-c_{i}(s)}
   -(k_{i}(z)-s)e^{xz}\int_{0}^{x}e^{-uz}R_{i}^{s}(u)\,du.\notag
\end{align}
\end{lemma}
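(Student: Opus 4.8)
The plan is to prove the two equalities of (\ref{opp2}) by different means: the first by an exponential martingale, the second, which is the substantive one, by a potential-theoretic computation. For the first line I would use that $\xi_i$ is spectrally positive and therefore creeps downwards, so that $\xi_i(\tau_i^-(x)) = -x$ on $\{\tau_i^-(x) < \infty\}$, exactly as already observed in the text. Since $k_i(c_i(s)) = s$, the process $M_t = \exp\{-c_i(s)\xi_i(t) - st\}$ is a mean-one martingale. On $\{t < \tau_i^-(x)\}$ one has $\xi_i(t) > -x$, whence $M_{t \wedge \tau_i^-(x)} \le e^{c_i(s)x}$ is bounded; optional stopping together with dominated convergence as $t \to \infty$ (the contribution of $\{\tau_i^-(x) = \infty\}$ vanishing because $e^{-st} \to 0$ for $s > 0$) yields $\mathbf{E}\,e^{c_i(s)x - s\tau_i^-(x)} = 1$, i.e. $\underline{m}_i^x(s) = e^{-c_i(s)x}$; the value $s = 0$ follows by letting $s \downarrow 0$.

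The second line is harder, and the single exponential martingale does not suffice: the upward crossing occurs by a jump, so there is a genuine overshoot, and stopping $\exp\{-z\xi_i(t) - t k_i(z)\}$ at $\tau_i^+(x)$ is illegitimate (before $\tau_i^+(x)$ the path may descend arbitrarily far, so the stopped process is unbounded, and in any case this would only pin down the single value $z = c_i(s)$, up to a correction). Instead I would compute the joint law of $(\tau_i^+(x), T_i^+(x))$ from the $s$-potential measure $V_x^s(du) = \int_0^\infty e^{-st}\mathbf{P}[\xi_i(t) \in du,\ \tau_i^+(x) > t]\,dt$ of the process started at $0$ and killed upon entering $(x, \infty)$, which is carried by $(-\infty, x]$. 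A crossing from a pre-jump level $u \le x$ by a jump of size $v > x - u$ lands at $u + v$ with overshoot $u + v - x$, so the compensation formula for the Poissonian jumps (of intensity $\lambda_i\,dF_i$) gives
$$ \overline{m}_i^x(z,s) = \int_{(-\infty,\,x]} V_x^s(du) \int_{(x-u,\,\infty)} \lambda_i\, e^{-z(u+v-x)}\, dF_i(v). $$
The crux is to identify $V_x^s$: for a spectrally positive process absorbed above $x$ its density is a two-term expression built from the resolvent $R_i^s$ and from the exponential $e^{c_i(s)\,\cdot}$ attached to the downward functional of the first part.

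Once the explicit potential is inserted, I would carry out the inner $v$-integral via $\lambda_i \int_0^\infty e^{-vz}\,dF_i(v) = k_i(z) - a_i z + \lambda_i$ and collapse the $u$-integral with the defining relation $\int_0^\infty e^{-xz} R_i^s(x)\,dx = (k_i(z)-s)^{-1}$; the three terms $e^{xz}$, $R_i^s(x)\frac{k_i(z)-s}{z-c_i(s)}$ and $(k_i(z)-s)e^{xz}\int_0^x e^{-uz}R_i^s(u)\,du$ then emerge. As a cross-check I would Laplace transform (\ref{opp2}) in $x$: by the convolution theorem and the defining relation of $R_i^s$ the right-hand side transforms to
$$ \int_0^\infty e^{-px}\,\overline{m}_i^x(z,s)\,dx = \frac{k_i(p)-k_i(z)}{(p-z)(k_i(p)-s)} - \frac{k_i(z)-s}{(k_i(p)-s)(z-c_i(s))}, $$
which should match the double transform of the crossing functional computed directly, and whose apparent singularity at $z = c_i(s)$ cancels (consistently, $\overline{m}_i^x(c_i(s),s) = e^{c_i(s)x} - R_i^s(x)\,k_i'(c_i(s))$).

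The \emph{main obstacle} is precisely the explicit identification of the killed potential $V_x^s$ (equivalently, the resolvent density of $\xi_i$ absorbed above the level $x$) in terms of $R_i^s$ and $c_i(s)$, and then organizing the ensuing algebra so that the boundary integral $\int_0^x e^{-uz}R_i^s(u)\,du$ appears with exactly the coefficient $(k_i(z)-s)e^{xz}$; the double-transform identity above is the safeguard confirming this bookkeeping.
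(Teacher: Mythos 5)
Your martingale argument for the first equality is complete and correct: since $\xi_i$ has only positive jumps it creeps through $-x$, the stopped martingale $e^{-c_i(s)\xi_i(t)-st}$ is bounded by $e^{c_i(s)x}$ before $\tau_i^-(x)$, and optional stopping gives $\underline{m}_i^x(s)=e^{-xc_i(s)}$. (The paper simply cites this identity to Zolotarev, so here you actually supply more than the paper does.)

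For the second equality, however, there is a genuine gap: the step you yourself flag as ``the main obstacle'' --- the explicit identification of the killed potential $V_x^s$ in terms of $R_i^s$ and $c_i(s)$ --- is never carried out, and it is not a routine technicality: that identification is essentially equivalent in content to the lemma itself (the standard derivations of the killed potential for a spectrally one-sided process go through exactly the same Wiener--Hopf/exit identities one is trying to prove). As written, the compensation-formula route reduces the lemma to an unproved formula of comparable depth. The paper closes precisely this gap by a different and self-contained route: it starts from the Pecherskii--Rogozin identity (\ref{opp27}), inserts the explicit spectrally-positive Wiener--Hopf factor $\bold E\, e^{-p\xi^{+}_{\nu_s}}=\frac{s}{c(s)}\,\frac{p-c(s)}{k(p)-s}$ to obtain the double transform (\ref{opp29}), and then inverts in $p$ using $\int_0^\infty e^{-px}R^s(x)\,dx=(k(p)-s)^{-1}$ and the convolution theorem. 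Notably, the ``cross-check'' double transform you compute at the end,
\begin{align*}
\int_0^\infty e^{-px}\,\overline{m}_i^x(z,s)\,dx=\frac{k_i(p)-k_i(z)}{(p-z)(k_i(p)-s)}-\frac{k_i(z)-s}{(k_i(p)-s)(z-c_i(s))}
=\frac1{p-z}\left(1-\frac{p-c_i(s)}{k_i(p)-s}\,\frac{k_i(z)-s}{z-c_i(s)}\right),
\end{align*}
is exactly (\ref{opp29}); so the shortest way to repair your argument is to promote this check to the proof: derive the right-hand side directly from Pecherskii--Rogozin plus the Wiener--Hopf factor (rather than assuming the answer), and conclude by uniqueness of Laplace transforms. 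Alternatively you must actually write down and justify $V_x^s(du)$, which amounts to the same Wiener--Hopf computation.
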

Note   that    the  first  equality  of  (\ref{opp2})  is well know  new ( see  for  instance  \cite{Zolotarev 1964}).
Proof  of  the    second   relation  is given  in appendix.   
We  now consider   the two-sided  exit  problem  for the auxiliary  processes.
For  $d_{i}>0, $  $x\in[0,d_{i}]$  denote by
$$
   \chi_{i,x}^{d_{i}}=\inf\{t:x+\xi_{i}(t)\notin[0,d_{i}]\},
   \qquad i=1,2
$$
the first exit time from  the interval  $ [0,d_{i}] $
by the  process    $x+\xi_{i}(t).$
Introduce the events:
$\overline{A}_{i}=\{x+\xi_{i}(\chi_{i,x}^{d_{i}})> d_{i}\} $ the exit from the
interval occurs through the upper boundary;
$ \underline{A}_{i}=\{x+\xi_{i}(\chi_{i,x}^{d_{i}})\le0\} $ the  exit  from the
interval occurs through the lower boundary.  By
$ T_{i}(x)=(x+\xi_{i}(\chi_{i,x}^{d_{i}})-d_{i})\bold I_{\overline{A}_{i}}
+0\cdot \bold I_{\underline{A}_{i}} $  we denote the value of the overshoot
at the instant of the first   exit.
Here $ \bold I_{A} $  is the indicator  of the event $ A. $
Introduce the Laplace  transforms
\begin{align*}
   \underline{V}_{i,x}^{d_{i}}(s)=
   \bold E\left[e^{-s\chi_{i,x}^{d_{i}}};\underline{A}_{i} \right],\qquad
   \overline{V}_{i,x}^{d_{i}}(z,s)=
   \bold E\left[e^{-s\chi_{i,x}^{d_{i}}-zT_{i}(x)};\overline{A}_{i} \right],
   \quad \Re(z)\ge0.
\end{align*}

\begin{lemma}

For  $s\ge0,$ $i=1,2$  the
following equalities   hold:
\begin{align}                                               \label{opp3}
&   \underline{V}_{i,x}^{d_{i}}(s)
    =\frac{R_{i}^{s}(d_{i}-x)}{R_{i}^{s}(d_{i})},\notag \\
&   \overline{V}_{i,x}^{d_{i}}(z,s)=
    \overline{m}_{i}^{d_{i}-x}(z,s)
    - \frac{R_{i}^{s}(d_{i}-x)}{R_{i}^{s}(d_{i})}\,
    \overline{m}_{i}^{d_{i}}(z,s).
\end{align}
\end{lemma}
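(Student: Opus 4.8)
The plan is to reduce both identities of (\ref{opp3}) to the one-boundary transforms of (\ref{opp2}) by applying the strong Markov property of $\xi_i$ at the two one-sided first passage times, and then to close the resulting linear system by specialising Lemma 1 at the point $z=c_i(s)$. For the process started at $x\in[0,d_i]$ I would work with the two \emph{unrestricted} passage times $\tau_i^-(x)$ (passage of $x+\xi_i$ below $0$) and $\tau_i^+(d_i-x)$ (passage of $x+\xi_i$ above $d_i$), so that $\chi_{i,x}^{d_i}=\min\{\tau_i^-(x),\tau_i^+(d_i-x)\}$ and the events $\underline{A}_i,\overline{A}_i$ record which of the two happens first. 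Since $\xi_i$ has no negative jumps, the lower level is reached continuously, so on $\underline{A}_i$ the process sits exactly at $0$ at time $\chi_{i,x}^{d_i}$, while on $\overline{A}_i$ it sits at $d_i+T_i(x)$.

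The first decomposition splits the unrestricted lower passage according to whether the upper level is crossed beforehand. Conditioning on $\overline{A}_i$ and using the strong Markov property at $\tau_i^+(d_i-x)$, the residual passage from $d_i+T_i(x)$ down to $0$ contributes the factor $\underline{m}_i^{d_i+T_i(x)}(s)=e^{-(d_i+T_i(x))c_i(s)}$ of (\ref{opp2}); this yields
\[ \underline{m}_i^{x}(s)=\underline{V}_{i,x}^{d_i}(s)+e^{-d_ic_i(s)}\,\overline{V}_{i,x}^{d_i}(c_i(s),s). \]
Symmetrically, splitting the unrestricted upper passage according to whether the lower level is reached first, and restarting from $0$ on $\underline{A}_i$ (where the residual crossing time and overshoot are governed by $\overline{m}_i^{d_i}(z,s)$), gives
\[ \overline{m}_i^{d_i-x}(z,s)=\overline{V}_{i,x}^{d_i}(z,s)+\underline{V}_{i,x}^{d_i}(s)\,\overline{m}_i^{d_i}(z,s). \]
The second relation already produces the claimed expression for $\overline{V}_{i,x}^{d_i}(z,s)$ once $\underline{V}_{i,x}^{d_i}(s)$ is known, so it remains only to solve for the latter.

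To do so I would evaluate the second relation at $z=c_i(s)$ and substitute into the first. The one delicate point is that (\ref{opp2}) has a removable singularity at $z=c_i(s)$, where $(k_i(z)-s)/(z-c_i(s))$ is of the form $0/0$; L'Hôpital's rule turns it into $k_i'(c_i(s))$ and, since $k_i(c_i(s))-s=0$, annihilates the last integral term, leaving $\overline{m}_i^{x}(c_i(s),s)=e^{xc_i(s)}-k_i'(c_i(s))R_i^s(x)$. Inserting the arguments $d_i-x$ and $d_i$ and combining the two displayed relations, the exponential contributions cancel and the common nonzero factor $k_i'(c_i(s))e^{-d_ic_i(s)}$ divides out, leaving $\underline{V}_{i,x}^{d_i}(s)\,R_i^s(d_i)=R_i^s(d_i-x)$, i.e. the first identity of (\ref{opp3}); feeding this back into the second relation gives the second identity.

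The main obstacle is conceptual rather than computational: carefully justifying the two strong Markov splittings, in particular the continuity of the downward passage (no undershoot at $0$, so the residual lower passage is governed precisely by $e^{-(\cdot)c_i(s)}$) and the correct identification of the residual overshoot $T_i(x)$ on $\overline{A}_i$, and verifying that the removable singularity of (\ref{opp2}) at $z=c_i(s)$ is handled correctly so that the final division by $k_i'(c_i(s))e^{-d_ic_i(s)}$ is legitimate (here $k_i'(c_i(s))>0$ by convexity of $k_i$). A quick consistency check supports the answer: at $x=0$ the formula gives $\underline{V}_{i,0}^{d_i}(s)=1$, as it must since the continuously descending process exits through $0$ instantaneously.
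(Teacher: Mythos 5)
Your proof is correct, and for the second identity it rests on exactly the same decomposition the paper uses: splitting the unrestricted upward passage of $d_i-x$ according to whether the lower boundary is hit first, which gives
$\overline{m}_{i}^{d_{i}-x}(z,s)=\overline{V}_{i,x}^{d_{i}}(z,s)+\underline{V}_{i,x}^{d_{i}}(s)\,\overline{m}_{i}^{d_{i}}(z,s)$,
i.e.\ the displayed equation in the paper's proof of Lemma~2. Where you genuinely diverge is on the first identity: the paper does not prove it at all, but simply cites it (Korolyuk for the compound Poisson case, Suprun for spectrally one-sided L\'evy processes), whereas you derive it by adding the symmetric decomposition of the downward passage,
$\underline{m}_{i}^{x}(s)=\underline{V}_{i,x}^{d_{i}}(s)+e^{-d_{i}c_{i}(s)}\overline{V}_{i,x}^{d_{i}}(c_{i}(s),s)$,
and closing the $2\times2$ system at $z=c_{i}(s)$. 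That extra step is sound: the removable singularity in (\ref{opp2}) indeed yields $\overline{m}_{i}^{x}(c_{i}(s),s)=e^{xc_{i}(s)}-k_{i}'(c_{i}(s))R_{i}^{s}(x)$, the exponential terms cancel as you say, and $k_{i}'(c_{i}(s))>0$ because $k_{i}$ is strictly convex on $(0,\infty)$ with $k_{i}(0)=0<s=k_{i}(c_{i}(s))$, so the division is legitimate. The payoff of your route is a self-contained lemma that needs only (\ref{opp2}) and the strong Markov property; the cost is the small analytic care at $z=c_{i}(s)$, which the paper avoids by outsourcing the first identity to the literature.
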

Note, that the first relation of the lemma was derived in \cite{Kr18}
 for a  compound   Poisson  process, and  in \cite{Suprun 1976}  for  a  spectrally one-sided  L\'evy process (\ref{opp28}).
To verify the second relation, we make use of the following  
equation:
\begin{align*}
&  \bold E\left[e^{-s\tau_{i}^{+}(d_{i}-x)};T_{i}^{+}(d_{i}-x)\in du \right]=
   \bold E\left[e^{-s\chi_{i,x}^{d_{i}}};T_{i}(x)\in du,\overline{A}_{i}\right]+\\
&   + \bold E\left[e^{-s\chi_{i,x}^{d_{i}}};\underline{A}_{i} \right]
   \bold E\left[e^{-s\tau_{i}^{+}(d_{i})};T_{i}^{+}(d_{i})\in du \right],
   \qquad  x\in[0,d_{i}].
\end{align*}
The latter was derived for spectrally one-sided  L\'{e}vy processes (\ref{opp28})
in \cite{Kad6}, \cite{Kad3}, and for general L\'{e}vy processes in \cite{2Ka2}. 
Now plugging in the  expression for  
$\bold E\left[e^{-s\chi_{i,x}^{d_{i}}};\underline{A}_{i} \right]$ 
(the  first equality of the  lemma),  we  obtain the  second statement 
of the lemma.

\section {One-boundary characteristics of the process  $\xi(x,t).$  }

In this section we derive the Laplace transforms of the one-boundary
characteristics of the process and  study their asymptotic behavior.
Let us formally define the one-boundary functionals of the process $ \xi(x,t),$
$ t\ge 0.$ For  $ r\le\min\{x,b\} $  denote  by
\begin{align*}
  \underline\tau_{r}^{x}(b)=\inf\{t:\xi(x,t)\le r\},
   \qquad \underline f_{r}^{x}(s)=
   \bold E\left[ e^{-s\underline\tau_{r}^{x}(b)};
   \underline\tau_{r}^{x}(b)<\infty\right],
\end{align*}
the first passage time of the level $r$  by the process  $\xi(x,t)$ and its
Laplace transform. For  $  k\ge\max\{x,b\} $ denote by
\begin{align*}
   \overline\tau_{x}^{k}(b)=\inf\{t:\xi(x,t)> k\},
   \qquad     \overline T_{x}^{k}= \xi(x,\overline\tau_{x}^{k}(b))-k
\end{align*}
the first crossing time of the level $ k $ and the value of the overshoot by
the process  $\xi(x,t).$  The  variables   $\underline\tau_{r}^{x}(b), \:\overline\tau_{x}^{k}(b),\: \overline T_{x}^{k} $ are called  the  one-boundary  characteristics  of the process.
Introduce
\begin{align*}
   \overline f_{x}^{k}(s)=
   \bold E\left[ e^{-s\overline\tau_{x}^{k}(b)};
   \overline\tau_{x}^{k}(b)<\infty\right],\quad
   \overline f_{x}^{k}(z,s)=
   \bold E\left[ e^{-s\overline\tau_{x}^{k}(b)-z\overline T_{x}^{k}};
   \overline\tau_{x}^{k}(b)<\infty\right].
\end{align*}
For  $ s\ge0 $  define the function $ K_{x}^{s}(u), $
$ x\in\mathbb{R}, $ $ u\ge0, $
by means of its Laplace transform
$\mathbb{K}_{x}^{s}(z): $
\begin{align}                                         \label{opp4}
    \mathbb{K}_{x}^{s}(z)=
    \int_{0}^{\infty}e^{-uz}K_{x}^{s}(u)\,du=
    \frac{k_{1}(z)-s}{k_{2}(z)-s}
    \int_{0}^{\infty}e^{-uz}R_{1}^{s}(x+u)\,du,
\end{align}
where $ \Re(z)>\max\{c_{1}(s),c_{2}(s)\}.$
Note, that it follows from the definition  $ (\ref{opp4} )$ that for $ x\le 0 $
$
   \mathbb{K}_{x}^{s}(z)=e^{xz}(k_{2}(z)-s)^{-1}
$
and $  K_{x}^{s}(u)=R_{2}^{s}(x+u). $

For a fixed $ s\ge0 $   define  the  function $ F_{s}(u), $ $ u\ge0 $  by means  of its Laplace transform
$\mathbb{F}_{s}(z):  $
\begin{align}                                            \label{opp5}
    \mathbb{F}_{s}(z)=
    \int_{0}^{\infty}e^{-uz}F_{s}(u)\,du =
    \frac{1}{z-c_{1}(s)}\,\frac{k_{1}(z)-s}{k_{2}(z)-s},
    \qquad \Re(z)> c_{2}(s).
\end{align}

\begin{theorem}  \label{topp1}

The Laplace transforms  of
$\underline\tau_{r}^{x}(b),$ $\overline\tau_{x}^{k}(b) $
and of the joint distribution of
$ \{\overline\tau_{x}^{k}(b),\overline T_{x}^{k}\} $  are such that for
$ s\ge0 $
\begin{align}                                     \label{opp6}
   \underline f_{r}^{x}(s)
&  =\frac{C_{1}^{b-x}(c_{2}(s),s)}{C_{1}^{b-r}(c_{2}(s),s)},
   \qquad r\le\min\{x,b\},
\end{align}
\begin{align}                                       \label{opp7}
   \overline f_{x}^{k}(s)
&   = 1+s\int_{0}^{b-x}R_{1}^{s}(u)\,du
    + s\int_{0}^{d_{2}}K_{b-x}^{s}(u)\,du-\notag\\
&   - \frac{K_{b-x}^{s}(d_{2})}{F_{s}(d_{2})}
    \left(\frac{s}{c_{1}(s)}+s\int_{0}^{d_{2}}F_{s}(u)\,du\right),
    \qquad k\ge\max\{x,b\},
\end{align}
\begin{align}                                     \label{opp8}
&   \overline f_{x}^{k}(z,s)
   = e^{zd_{2}}(k_{2}(z)-s)
    \left(\mathbb{K}_{b-x}^{s}(z)
    -\int_{0}^{d_{2}}e^{-uz}K_{b-x}^{s}(u)\,du\right)-\\
&   - \frac{K_{b-x}^{s}(d_{2})}{F_{s}(d_{2})}\,
     e^{zd_{2}}(k_{2}(z)-s)
    \left(\mathbb{F}_{s}(z)
    -\int_{0}^{d_{2}}e^{-uz}F_{s}(u)\,du\right),\quad k\ge\max\{x,b\}, \notag
\end{align}
where $ d_{2}=k-b,$  $ C_{i}^{x}(z,s)= e^{zx},$ $ x<0,$
$$
    C_{i}^{x}(z,s)=e^{zx}\left(1-(k_{i}(z)-s)
    \int_{0}^{x}e^{-uz}R_{i}^{s}(u)\,du\right),\quad x\ge 0.
$$
\end{theorem}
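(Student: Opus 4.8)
The plan is to exploit the regenerative structure of $\xi(x,t)$ at the switching level $b$ together with the strong Markov property, reducing every one-boundary functional to a linear renewal equation whose only unknown is the value of the functional started from $b$. Recall from \ref{opp1} that while $\xi(x,t)\le b$ the increments are those of $\xi_1$ and while $\xi(x,t)>b$ they are those of $\xi_2$; since both auxiliary processes are spectrally positive, the level $b$ is reached continuously from above but is overshot when approached from below. Consequently each visit of the process to $b$ is a regeneration epoch, and the two auxiliary two-sided problems of Lemma 2, on $[r,b]$ for the downward problem and on $[b,k]$ for the upward one, describe precisely one excursion between consecutive regenerations.

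For the lower passage \ref{opp6} I would start the process at $x\in[r,b]$ and decompose the first excursion by the first exit of $\xi_1$ from $[r,b]$. With ``probability'' $\underline V_{1,x-r}^{b-r}(s)=R_1^s(b-x)/R_1^s(b-r)$ the process leaves through the bottom, which by spectral positivity is exactly the passage to $r$ and terminates the problem; with the defective overshoot law carried by $\overline V_{1,x-r}^{b-r}(\cdot,s)$ it crosses $b$, lands at $b+y$, returns to $b$ continuously with weight $\underline m_2^y(s)=e^{-yc_2(s)}$ by \ref{opp2}, and regenerates. This gives $\underline f_r^x(s)=R_1^s(b-x)/R_1^s(b-r)+\overline V_{1,x-r}^{b-r}(c_2(s),s)\,\underline f_r^b(s)$, the argument $z=c_2(s)$ encoding the continuous return of $\xi_2$. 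Setting $x=b$ solves for $\underline f_r^b(s)$, and after substituting back I would check, using the second line of \ref{opp2}, the definition of $\overline V$ in \ref{opp3}, and the identity $C_i^x(z,s)=\overline m_i^x(z,s)+R_i^s(x)\,(k_i(z)-s)/(z-c_i(s))$, that the right-hand side collapses to the ratio $C_1^{b-x}(c_2(s),s)/C_1^{b-r}(c_2(s),s)$.

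For the upward problem I would run the same scheme on $[b,k]$ with $d_2=k-b$. Starting below $b$, the process first crosses $b$ with the overshoot law of $\overline m_1^{b-x}(\cdot,s)$, landing at $b+y$; if $y>d_2$ the level $k$ is overshot at once, and otherwise it performs a two-sided $\xi_2$-excursion on $[b,k]$ that either crosses $k$ with weight $\overline V_{2,y}^{d_2}(z,s)$ or returns to $b$ with weight $\underline V_{2,y}^{d_2}(s)$ and regenerates; starting above $b$ one begins directly with the $\xi_2$-excursion, the two cases being unified by $K_{b-x}^s$ since for $x\ge b$ the definition \ref{opp4} reduces it to the plain $\xi_2$-resolvent $R_2^s((b-x)+\cdot)$. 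Convolving the $\xi_1$-overshoot measure with the $\xi_2$-quantities and using \ref{opp2}, \ref{opp3}, the factor $(k_1(z)-s)/(k_2(z)-s)$ together with $1/(z-c_1(s))$ precipitate exactly the transforms \ref{opp4} and \ref{opp5}, so that the excursion and the regeneration contributions assemble into the $K_{b-x}^s$ and $F_s$ terms of \ref{opp8}; solving the renewal identity at $x=b$ produces the coefficient $K_{b-x}^s(d_2)/F_s(d_2)$. The passage-time transform \ref{opp7} I would then obtain from \ref{opp8} by letting $z\to0$, so that $\overline T_x^k$ drops out, using $k_i(0)=0$, $(k_2(0)-s)\mathbb F_s(0)=s/c_1(s)$ and the resolvent identity to rewrite the tail transforms as the integrals $s\int_0^{b-x}R_1^s(u)\,du$ and $s\int_0^{d_2}K_{b-x}^s(u)\,du$.

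The main obstacle is this last identification: recognising that the Laplace convolution of the $\xi_1$-passage overshoot with the $\xi_2$-resolvent is precisely $\mathbb K_{b-x}^s(z)$ of \ref{opp4}, and that summing the geometric series generated by successive returns to $b$ reproduces $\mathbb F_s(z)$ of \ref{opp5}; both steps require careful manipulation of the transforms in Lemmas 1 and 2 and a clean separation of the ``big-jump'' contribution $\int_{d_2}^\infty$, which corresponds to overshooting $k$ directly while crossing $b$. A secondary point needing care is the convergence of that geometric series and the possible defectiveness of the transforms for $s\ge0$; this is controlled by the strict positivity of $c_1(s)$ and $c_2(s)$ and by first working on the half-plane $\Re(z)>\max\{c_1(s),c_2(s)\}$ and then extending by analytic continuation.
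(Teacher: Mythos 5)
Your proposal is correct and follows essentially the same route as the paper: a regenerative decomposition at the switching level $b$, renewal equations built from Lemmas 1 and 2 that are solved first at $x=b$, and identification of the resulting convolutions with the transforms (\ref{opp4}) and (\ref{opp5}). The only point you leave implicit is the case $x>b$ in (\ref{opp6}), where one simply prepends the continuous-descent factor $e^{-(x-b)c_{2}(s)}$ of $\xi_{2}$ --- exactly the weight you already use for the return to $b$.
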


\begin{corollary}     \label{copp1}
Let   $ k_{1}(z)=k_{2}(z)=k(z). $
Then
\begin{align}                                           \label{opp9}
&   \underline f_{r}^{x}(s)
   =   e^{-(x-r)c(s)},\qquad r\le x,\notag\\
&   \overline f_{x}^{k}(s)
   =  1+s\int_{0}^{k-x}R^{s}(u)\,du
   - \frac{s}{c(s)}R^{s}(k-x),\qquad k\ge x,\\
&   \overline f_{x}^{k}(z,s)
   =  e^{(k-x)z}
   \left(1- (k(z)-s)\int_{0}^{k-x}e^{-uz}R^{s}(u)\,du\right)
   - R^{s}(k-x)\,\frac{k(z)-s}{z-c(s)},   \notag
\end{align}
where $ R^{s}(x),$ $x\ge0 $ is the resolvents of the process
$ \xi(t)=\xi_{i}(t); $ $ c(s)>0,$ $s>0$ is the unique root of the equation
$k(z)=s$ in the semi-plane $\Re(z)>0. $
\end{corollary}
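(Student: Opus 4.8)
The plan is to derive all three identities of the corollary directly from Theorem~\ref{topp1} by specialising its ingredients to the case $k_1(z)=k_2(z)=k(z)$. The first observation is that under this hypothesis the two auxiliary processes coincide in law, so $c_1(s)=c_2(s)=c(s)$ and $R_1^{s}(x)=R_2^{s}(x)=R^{s}(x)$. Consequently the ratio $\frac{k_1(z)-s}{k_2(z)-s}$ appearing in both (\ref{opp4}) and (\ref{opp5}) collapses to $1$, and the auxiliary functions simplify to
\begin{align*}
   K_{b-x}^{s}(u)=R^{s}(b-x+u),\qquad \mathbb{F}_{s}(z)=\frac{1}{z-c(s)},\quad\text{i.e.}\quad F_{s}(u)=e^{c(s)u}.
\end{align*}
These two elementary evaluations are the engine of the whole computation.

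For the first identity I would substitute $c_2(s)=c(s)$ into the definition of $C_1^{x}(z,s)$. The crucial point is that $c(s)$ is by definition the root of $k(z)-s=0$, so the factor $k_1(c(s))-s$ vanishes and the integral term in $C_1^{x}(c(s),s)$ disappears, leaving $C_1^{x}(c(s),s)=e^{c(s)x}$ for every $x$. Formula (\ref{opp6}) then reduces to $e^{c(s)(b-x)}/e^{c(s)(b-r)}=e^{-(x-r)c(s)}$, as claimed.

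For the second identity I would insert the above simplifications into (\ref{opp7}) with $d_2=k-b$ and $K_{b-x}^{s}(d_2)=R^{s}(k-x)$. The change of variable $v=b-x+u$ merges the two resolvent integrals $\int_0^{b-x}R^{s}$ and $\int_0^{d_2}R^{s}(b-x+\cdot)$ into the single integral $\int_0^{k-x}R^{s}(v)\,dv$. In the subtracted term one computes $\int_0^{d_2}F_{s}(u)\,du=(e^{c(s)d_2}-1)/c(s)$, whence $\frac{s}{c(s)}+s\int_0^{d_2}F_{s}=\frac{s}{c(s)}e^{c(s)d_2}$; dividing by $F_{s}(d_2)=e^{c(s)d_2}$ cancels this exponential and delivers exactly $\frac{s}{c(s)}R^{s}(k-x)$, giving the stated expression.

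The third identity is the most delicate bookkeeping. Here I would first note that $\mathbb{K}_{b-x}^{s}(z)-\int_0^{d_2}e^{-uz}K_{b-x}^{s}(u)\,du$ is the tail Laplace integral $\int_{d_2}^{\infty}e^{-uz}R^{s}(b-x+u)\,du$, which after a shift and use of the resolvent normalisation $\int_0^{\infty}e^{-vz}R^{s}(v)\,dv=(k(z)-s)^{-1}$ becomes $e^{(k-x)z}\bigl[(k(z)-s)^{-1}-\int_0^{k-x}e^{-vz}R^{s}(v)\,dv\bigr]$. Multiplying by $e^{zd_2}(k(z)-s)$ produces the first displayed term. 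For the subtracted block the inner bracket evaluates to $\frac{e^{-d_2(z-c(s))}}{z-c(s)}$, and the prefactors $\frac{R^{s}(k-x)}{e^{c(s)d_2}}e^{zd_2}(k(z)-s)$ combine so that all exponentials cancel, leaving $R^{s}(k-x)\frac{k(z)-s}{z-c(s)}$. The only point demanding care is tracking these exponential factors through the cancellation; there is no conceptual obstacle, the corollary being a pure specialisation of the theorem.
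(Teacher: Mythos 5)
Your proposal is correct and follows essentially the same route as the paper: the paper's proof likewise records the specialisations $C_{i}^{x}(c(s),s)=e^{xc(s)}$, $\mathbb{F}_{s}(z)=(z-c(s))^{-1}$, $F_{s}(u)=e^{uc(s)}$, $K_{x}^{s}(u)=R^{s}(x+u)$ and substitutes them into (\ref{opp6})--(\ref{opp8}). You merely carry out in detail the algebraic cancellations (vanishing of the integral term at $z=c(s)$, merging the resolvent integrals, and the exponential cancellations in the third identity) that the paper leaves implicit, and these computations check out.
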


\begin{corollary}     \label{copp2}

Assume  that the conditions  $ (A): $ $\bold E\xi_{i}(1)=0, $
$\sigma_{i}^{2}=\bold E\xi_{i}(1)^{2}<\infty$ are satisfied. Then
the following  limiting equalities  are valid:
\begin{align*}
&   \lim_{B\to\infty}
   \bold E\left[ e^{-s\overline\tau^{kB}_{xB}(bB)/B^{2}} \right]=
    \frac{\sigma_{1}e^{-(b-x)s_{1}}}
    {\sigma_{1}\ch((k-b)s_{2})+ \sigma_{2}\sh((k-b)s_{2})}, \qquad x\le b,\\
&   \lim_{B\to\infty}
   \bold E\left[ e^{-s\overline\tau^{kB}_{xB}(bB)/B^{2}} \right]=
    \frac{\sigma_{1}\ch((x-b)s_{2})+\sigma_{2}\sh((x-b)s_{2})}
    {\sigma_{1}\ch((k-b)s_{2})+\sigma_{2}\sh((k-b)s_{2})}, \qquad x\in[b,k];
\end{align*}
\begin{align*}
&   \lim_{B\to\infty}
   \bold E\left[ e^{-s\underline\tau^{xB}_{rB}(bB)/B^{2}} \right]=
    \frac{\sigma_{2}e^{-(x-b)s_{2}}}
    {\sigma_{1}\sh((b-r)s_{1})+\sigma_{2}\ch((b-r)s_{1})}, \qquad x\ge b,\\
&   \lim_{B\to\infty}
   \bold E\left[ e^{-s\underline\tau^{xB}_{rB}(bB)/B^{2}} \right]=
    \frac{\sigma_{1}\sh((b-x)s_{1})+\sigma_{2}\ch((b-x)s_{1})}
    {\sigma_{1}\sh((b-r)s_{1})+ \sigma_{2}\ch((b-r)s_{1})}, \qquad x\in[r,b],
\end{align*}
where $s_{i}=\sqrt{2s}/\sigma_{i},$ $ i=1,2, $  $k\ge\max\{x,b\},$
 $  r\le\min\{x,b\}. $
\end{corollary}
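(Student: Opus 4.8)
The strategy is to obtain the limits by applying a suitable diffusion-scaling (Brownian) limit to the exact Laplace transforms for $\overline f_x^k(s)$ and $\underline f_r^x(s)$ furnished by Theorem \ref{topp1}. Under condition $(A)$, each auxiliary process $\xi_i(t)$ satisfies a central limit theorem, so $\xi_i(B^2 t)/B$ converges weakly to a Brownian motion with variance parameter $\sigma_i^2$. Consequently the rescaled oscillating process $\xi(xB, B^2 t)/B$ should converge to an oscillating Brownian motion with diffusion coefficient $\sigma_1^2$ below the switching level $b$ and $\sigma_2^2$ above it. The exit-time functionals on the rescaled spatial scale ($rB, xB, bB, kB$) and time scale ($B^2$) are exactly the functionals appearing in the statement, so the corollary is the explicit evaluation of these boundary transforms for the limiting oscillating Brownian motion.

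I will prove the limits directly from the formulas rather than via weak convergence, since the formulas are explicit. The key analytic input is the asymptotic behaviour of the root $c_i(s)$ and of the resolvent $R_i^s$ under the scaling $s\mapsto s/B^2$, $x\mapsto xB$. From $k_i(z)=a_i z+\lambda_i\int_0^\infty(e^{-xz}-1)dF_i(x)$ and condition $(A)$ — which forces $\bold E\xi_i(1)=a_i-\lambda_i\bold E\xi^i=0$ and identifies $\sigma_i^2$ with the second moment — one gets the local expansion $k_i(z)\sim \tfrac{1}{2}\sigma_i^2 z^2$ for small $z$. Substituting $z=\zeta/B$ and $s=\hat s/B^2$ shows that the defining equation $k_i(z)=s$ becomes, to leading order, $\tfrac12\sigma_i^2\zeta^2=\hat s$, whence $B\,c_i(\hat s/B^2)\to \sqrt{2\hat s}/\sigma_i=s_i$. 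This is the central scaling fact; everything else is built on it.

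The main work, and the principal obstacle, is controlling the rescaled resolvents $R_i^{s/B^2}(\cdot B)$ and the auxiliary functions $K_{b-x}^s$, $F_s$, $C_i^x$ appearing in \eqref{opp6}–\eqref{opp8}. The plan is to pass to the Laplace-transform side: since $\int_0^\infty e^{-xz}R_i^s(x)\,dx=(k_i(z)-s)^{-1}$, the rescaled transform $\int_0^\infty e^{-\zeta u}B\,R_i^{\hat s/B^2}(Bu)\,du=(k_i(\zeta/B)-\hat s/B^2)^{-1}$ converges to $(\tfrac12\sigma_i^2\zeta^2-\hat s)^{-1}$, which is the Laplace transform of $\tfrac{2}{\sigma_i\sqrt{2\hat s}}\sinh(u\sqrt{2\hat s}/\sigma_i)=\tfrac{2}{\sigma_i^2 s_i}\sinh(u s_i)$. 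By a continuity theorem for Laplace transforms this gives the pointwise limit of the rescaled resolvent, and analogous computations handle $C_i^x$, $\mathbb K$ and $\mathbb F$: the factor $(k_1-s)/(k_2-s)\to(\sigma_1^2/\sigma_2^2)(\zeta^2/\zeta^2)=\sigma_1^2/\sigma_2^2$ survives into the limiting form of $F_s$ and $K$, and this ratio is exactly what produces the $\sigma_1,\sigma_2$ asymmetry between the numerator and denominator in the stated formulas.

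Finally I would assemble the pieces. For $\underline f_r^x(s)$ one substitutes $z=c_2(s)$ into $C_1^{b-x}$; under scaling $c_2(\hat s/B^2)\sim s_2/B$, and $C_1^{b-x}$ evaluated at this argument develops into a combination of $\ch$ and $\sh$ of $(b-x)s_1$ weighted by $\sigma_1,\sigma_2$ — precisely because $C_1^{\,y}(z,s)=e^{zy}\big(1-(k_1(z)-s)\int_0^y e^{-uz}R_1^s(u)\,du\big)$ mixes the scale $s_1$ (from $R_1$) with the evaluation point $z\sim s_2/B$ (from $c_2$). Taking the ratio $C_1^{b-x}/C_1^{b-r}$ and separating the cases $x\ge b$ (where $b-x<0$ and $C_1$ reduces to the exponential $e^{-(x-b)s_2}$) and $x\in[r,b]$ yields the two $\underline f$ formulas. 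The $\overline f$ formulas follow the same way from \eqref{opp7}, replacing the integrals $\int_0^{d_2}$ by their scaled limits (with $d_2=(k-b)B$) and using $B\,s/c_1(s)\to \sqrt{2\hat s}\,\sigma_1$ together with the limits of $K_{b-x}^s(d_2)/F_s(d_2)$; the hyperbolic denominator $\sigma_1\ch((k-b)s_2)+\sigma_2\sh((k-b)s_2)$ emerges from the limit of $F_s(d_2)$ combined with the surviving $\sigma_1^2/\sigma_2^2$ ratio. I would carry out the $x\le b$ case first (simplest, since $K_{b-x}^s$ retains the full cross-regime structure) and then the $x\in[b,k]$ case. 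The delicate point throughout is justifying interchange of the $B\to\infty$ limit with the Laplace inversion/integration; I would handle this by uniform bounds on the rescaled resolvents on compact $u$-intervals, which follow from the monotonicity and known asymptotics of scale functions under condition $(A)$.
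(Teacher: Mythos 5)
Your overall strategy coincides with the paper's: both substitute the diffusive scaling $s\mapsto s/B^{2}$, $x\mapsto xB$ into the exact formulas (\ref{opp6})--(\ref{opp8}) of Theorem \ref{topp1}, use the limits $Bc_{i}(s/B^{2})\to s_{i}$ and $\frac{1}{B}R_{i}^{s/B^{2}}(xB)\to\frac{2}{\sigma_{i}\sqrt{2s}}\,\sh(x^{+}s_{i})$, and then obtain the limits of the auxiliary functions $K_{x}^{s}$, $F_{s}$, $C_{1}^{x}$ by passing to their Laplace transforms, taking the limit there, and inverting. So the route is the paper's route; the problem is one concrete step.

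The step that fails as written is the claim that $(k_{1}(z)-s)/(k_{2}(z)-s)\to(\sigma_{1}^{2}/\sigma_{2}^{2})(\zeta^{2}/\zeta^{2})=\sigma_{1}^{2}/\sigma_{2}^{2}$. Under the relevant scaling the transform variable is $z=\zeta/B$ \emph{and} $s$ is replaced by $s/B^{2}$, so numerator and denominator are both of order $B^{-2}$ and the $s$-terms do not drop out: the correct limit is $\bigl(\tfrac12\sigma_{1}^{2}\zeta^{2}-s\bigr)/\bigl(\tfrac12\sigma_{2}^{2}\zeta^{2}-s\bigr)$. This matters because that ratio is precisely what generates the hyperbolic structure of the answer. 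For instance,
\begin{align*}
\frac{1}{B}\,\mathbb{F}_{s/B^{2}}(\zeta/B)\to\frac{1}{\zeta-s_{1}}\cdot\frac{\tfrac12\sigma_{1}^{2}\zeta^{2}-s}{\tfrac12\sigma_{2}^{2}\zeta^{2}-s}=\frac{\sigma_{1}^{2}}{\sigma_{2}^{2}}\cdot\frac{\zeta+s_{1}}{\zeta^{2}-s_{2}^{2}},
\end{align*}
whose inverse transform is $\frac{\sigma_{1}}{\sigma_{2}^{2}}\bigl(\sigma_{1}\ch(us_{2})+\sigma_{2}\sh(us_{2})\bigr)$, i.e.\ exactly the denominator appearing in the first limit of the corollary (cf.\ (\ref{opp25})). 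With your constant $\sigma_{1}^{2}/\sigma_{2}^{2}$ the limit of $F_{s}$ would instead be the pure exponential $\frac{\sigma_{1}^{2}}{\sigma_{2}^{2}}e^{us_{1}}$, and the same error would corrupt the limit of $K_{x}^{s}$ in (\ref{opp23}), so the final formulas would not come out. (There is also a harmless factor-of-two slip: $B\cdot\frac{s/B^{2}}{c_{1}(s/B^{2})}\to s/s_{1}=\sigma_{1}\sqrt{2s}/2$, not $\sigma_{1}\sqrt{2s}$.) Once the ratio is corrected, the rest of your plan reproduces the paper's computations (\ref{opp22})--(\ref{opp26}) and the assembly goes through.
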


\begin{proof}
Let us  verify (\ref{opp6}).  Set  $ x=b. $
In view of  the definition  of the process $ \xi(x,t) $  (\ref{opp1}),
spatial homogeneity of  the processes $ \xi_{i}(t)  $ and  Markov property  of
$ \chi_{1,x}^{d_{1}} $ we can write the following equation:
 \begin{align}                                        \label{opp10}
   \underline f_{r}^{b}(s)=
   \underline{V}_{1,d_{1}}^{d_{1}}(s)+
   \int_{0}^{\infty}{V}_{1,d_{1}}^{d_{1}}(du,s)e^{-uc_{2}(s)}
   \underline f_{r}^{b}(s), \qquad d_{1}=b-r,
 \end{align}
where
$
  {V}_{1,x}^{d_{1}}(du,s)= \bold E\left[e^{-s\chi_{1,x}^{d_{1}}};
   T_{1}(x)\in du,\overline{A}_{1} \right],
$
$ x\in[0,d_{1}].$  It follows from  (\ref{opp2}),
(\ref{opp3}) that
\begin{align}                                      \label{opp11}
    \overline{V}_{1,x}^{d_{1}}(z,s)=
    C_{1}^{d_{1}-x}(z,s)-
    \frac{R_{1}^{s}(d_{1}-x)}{R_{1}^{s}(d_{1})}\,C_{1}^{d_{1}}(z,s).
\end{align}
Taking into account the latter equality and  (\ref{opp10}), we derive
$
  \underline f_{r}^{b}(s)= C_{1}^{d_{1}}(c_{2}(s),s)^{-1}.
$
Let  $ x>b. $ Then a
\begin{align*}
    \underline f_{r}^{x}(s)= e^{-(x-b)c_{2}(s)}\underline f_{r}^{b}(s)
    =\frac{e^{-(x-b)c_{2}(s)}}{C_{1}^{d_{1}}(c_{2}(s),s)}
    = \frac{C_{1}^{b-x}(c_{2}(s),s)}{C_{1}^{b-r}(c_{2}(s),s)}.
\end{align*}
If  $x\in[r,b],$  then   we have from
$ \underline f_{r}^{b}(s)=\underline f_{x}^{b}(s)\underline f_{r}^{x}(s) $
that
\begin{align*}
    \underline f_{r}^{x}(s)=
    \frac{\underline f_{r}^{b}(s)}{\underline f_{x}^{b}(s)}=
    \frac{C_{1}^{b-x}(c_{2}(s),s)}{C_{1}^{b-r}(c_{2}(s),s)},
    \qquad   x\in[r,b].
\end{align*}
Hence,  we  showed that (\ref{opp6}) is  valid for all  $ x\ge r. $

We now verify  (\ref{opp8}). Set  first $ x=b.$  Then taking into account the  defining formula  (\ref{opp1}) of  the process $ \xi(x,t), $  spatial homogeneity of the
processes $ \xi_{i}(t) $ and  Markov property of $ \tau_{1}^{+}(x), $
we can write
\begin{align*}
    \overline f_{b}^{k}(z,s)
&    =   e^{zd_{2}}\int_{d_{2}}^{\infty}m_{1}^{0}(du,s)e^{-uz}+
    \int_{0}^{d_{2}}m_{1}^{0}(du,s) \overline{V}_{2,u}^{d_{2}}(z,s)\notag \\
&    + \int_{0}^{d_{2}}m_{1}^{0}(du,s)
    \frac{R_{2}^{s}(d_{2}-u)}{R_{2}^{s}(d_{2})}\,\overline f_{b}^{k}(z,s),
    \qquad d_{2}=k-b,
\end{align*}
where
$
  m_{1}^{x}(du,s)=\bold E\left[e^{-s\tau_{1}^{+}(x)};T_{1}^{+}(x)\in du\right].
$
By means of this  equation we can determine the function
$ \overline f_{b}^{k}(z,s).$  Making  use of the  expression for the function  $ F_{s}(u), $
$ u\ge 0$ (\ref{opp5}), equalities  (\ref{opp2}), (\ref{opp3}),
after  performing some calculations,  we find
\begin{align}                                             \label{opp12}
    \overline f_{b}^{k}(z,s)
    = C_{2}^{d_{2}}(z,s)-\frac{R_{2}^{s}(d_{2})}{F_{s}(d_{2})}
     e^{zd_{2}}(k_{2}(z)-s)
    \left(\mathbb{F}_{s}(z)-\int_{0}^{d_{2}}e^{-uz}F_{s}(u)du\right).
\end{align}
Let $ x\in (b,k].$ Then  the function  $ \overline f_{x}^{k}(z,s) $
can be found from the following  equation:
\begin{align*}
    \overline f_{x}^{k}(z,s)
    = \overline{V}_{2,x-b}^{d_{2}}(z,s) +
    \frac{R_{2}^{s}(k-x)}{R_{2}^{s}(d_{2})}\,\overline f_{b}^{k}(z,s),
    \qquad  x\in (b,k].
\end{align*}
 In  view of (\ref{opp12})  we   derive
\begin{align}                                         \label{opp13}
    \overline f_{x}^{k}(z,s)
    = C_{2}^{k-x}(z,s)-\frac{R_{2}^{s}(k-x)}{F_{s}(d_{2})}\,
    \mathfrak{F}_{d_{2}}^{s}(z),\qquad  x\in [b,k],
\end{align}
where
$
    \mathfrak{F}_{d_{2}}^{s}(z)= e^{zd_{2}}(k_{2}(z)-s)
    \left(\mathbb{F}_{s}(z)-\int_{0}^{d_{2}}e^{-uz}F_{s}(u)du\right).
$
Let $ x<b. $  Then we can determine the function $ \overline f_{x}^{k}(z,s) $
from the following relation:
\begin{align*}
    \overline f_{x}^{k}(z,s)
    = e^{zd_{2}}\int_{d_{2}}^{\infty}m_{1}^{b-x}(du,s)e^{-uz}+
    \int_{0}^{d_{2}}m_{1}^{b-x}(du,s) \overline f_{u+b}^{k}(z,s).
\end{align*}
Employing (\ref{opp2}), (\ref{opp3}),the  definition of the function
$ K_{x}^{s}(u) $ (\ref{opp4}) and
the formula (\ref{opp13}), we obtain
\begin{align}                                         \label{opp14}
    \overline f_{x}^{k}(z,s)
    = \mathfrak{K}_{b-x}^{d_{2}}(z,s)
    -\frac{K_{b-x}^{s}(d_{2})}{F_{s}(d_{2})}\,
    \mathfrak{F}_{d_{2}}^{s}(z), \qquad x<b,
\end{align}
where
$$
    \mathfrak{K}_{b-x}^{d_{2}}(z,s)=
    e^{zd_{2}}(k_{2}(z)-s)\left(\mathbb{K}_{b-x}^{s}(z)-
    \int_{0}^{d_{2}}e^{-uz}K_{b-x}^{s}(u)\,du\right).
$$
Note that for $ x\in[b,k] $  it follows from the  definition of the function
$ K_{x}^{s}(u) $  (\ref{opp4}) that
\begin{align*}
    \mathfrak{K}_{b-x}^{d_{2}}(z,s)=C_{2}^{k-x}(z,s),
    \qquad K_{b-x}^{s}(d_{2}) = R_{2}^{s}(k-x).
\end{align*}
Hence, the  formula  (\ref{opp14}) is  valid for  all  $ x\le k.$
Since  $ \overline f_{x}^{k}(s)=\overline f_{x}^{k}(0,s),$  then
(\ref{opp7}) follows from  (\ref{opp8}) when $ z=0. $
We now verify   statements of   Corollary \ref{copp1}.
In case when $ k_{1}(z)=k_{2}(z)=k(z) $  we have
\begin{align*}
&   C_{i}^{x}(c_{2}(s),s)=e^{xc(s)},\quad \mathbb{F}_{s}(z)=(z-c(s))^{-1},
    \quad F_{s}(u)=e^{uc(s)}, \\
&   \mathbb{K}_{x}^{s}(z)=e^{xz}\int_{x}^{\infty}e^{-uz}R^{s}(u)\,du,
    \quad K_{x}^{s}(u)= R^{s}(x+u).
\end{align*}
These equalities and  (\ref{opp6})-(\ref{opp8})  imply the formulae
(\ref{opp9}). The limiting equalities \ref{copp2}  are derived in Section 4.
\end{proof}

\section { Exit from the interval by the process $ \xi(x,t).$}

For $B>0,$ $x,b\in[0,B],$   introduce the following random variable:
$$
  \chi_{x}(b)=\inf\{t:\,\xi(x,t)\notin[0,B]\},\qquad i=1,2
$$
i.e. the first exit time from the interval $[0,B]$ by the process  $\xi(x,t).$
Introduce  the events:
$\overline A=\{\xi(x,\chi_{x}(b))> B\} $  the process exits the interval
through the upper boundary;
$ \underline A=\{\xi(x,\chi_{x}(b))\le0\} $ the process exits the interval
through the lower boundary.
Denote  by
$
   T(x)=(\xi(x,\chi_{x}(b))-B)\bold I_{\overline A}
   +0\cdot \bold I_{\underline A}
$
the value of the overshoot at the instant of the first exit.
Define
\begin{align*}
   \underline{V}_{x}(s)=
   \bold E\left[e^{-s\chi_{x}(b)}; \underline A \right],\qquad
   \overline{V}_{x}(z,s)=
   \bold E\left[e^{-s\chi_{x}(b)-zT(x)};\overline A \right],
   \quad   \Re(z)\ge0.
\end{align*}

\begin{theorem}         \label{topp2}

The Laplace transforms of $ \chi_{x}(b), $ $x,b\in[0,B] $  and of the joint
distribution of $ \{\chi_{x}(b),T(x) \} $
are such that for $ s\ge0 $
\begin{align}                                             \label{opp15}
    \underline{V}_{x}(s)=
    \frac{K_{b-x}^{s}(B-b)}{K_{b}^{s}(B-b)},\qquad
    \overline{V}_{x}(s)=
    \mathfrak{K}_{b-x}^{B-b}(s)-
    \frac{K_{b-x}^{s}(B-b)}{K_{b}^{s}(B-b)}\:\mathfrak{K}_{b}^{B-b}(s),
\end{align}
where
$
   \overline{V}_{x}(s)= \bold E\left[e^{-s\chi_{x}(b)};\overline A \right],
$
\begin{align*}
    \mathfrak{K}_{x}^{u}(s)=
    1+s\int_{0}^{x}R_{1}^{s}(u)\,du+s\int_{0}^{u}K_{x}^{s}(v)\,dv,
    \quad x\in\mathbb{R},\;u\ge0;
\end{align*}
\begin{align}                                             \label{opp16}
   \overline{V}_{x}(z,s)=
   \mathfrak{K}_{b-x}^{B-b}(z,s)-
   \frac{K_{b-x}^{s}(B-b)}{K_{b}^{s}(B-b)}\:\mathfrak{K}_{b}^{B-b}(z,s),
\end{align}
and
\begin{align*}
    \mathfrak{K}_{x}^{u}(z,s)=
    e^{uz}\left(C_{1}^{x}(z,s)-
    (k_{2}(z)-s)\int_{0}^{u}e^{-vz}K_{x}^{s}(v)\,dv\right),
    \quad x\in\mathbb{R},\;u\ge0.
\end{align*}
\end{theorem}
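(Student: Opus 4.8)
The plan is to reduce the two-sided exit problem to the one-boundary characteristics of Theorem~\ref{topp1} by means of two compensation (renewal) identities, in the spirit of the argument used for the auxiliary processes in Lemma~2 and borrowed from \cite{2Ka2}. First I would relate the first crossing of the upper level $B$ to the two-sided exit. On the event $\{\overline\tau_x^B(b)<\infty\}$ the process either leaves $[0,B]$ through the top at that very instant, or it first leaves through the lower boundary; since below $b$ it moves as the spectrally positive $\xi_1$, the level $0$ is reached continuously, the overshoot there vanishes, and the process restarts afresh at $0$. By the strong Markov property and spatial homogeneity this gives
$$\overline f_x^B(z,s)=\overline V_x(z,s)+\underline V_x(s)\,\overline f_0^B(z,s).\qquad(\ast)$$
Symmetrically, on $\{\underline\tau_0^x(b)<\infty\}$ the process either leaves through the bottom at that instant, or it first leaves through the top at level $B+w$ with overshoot $w=T(x)$; from there it descends to $0$ by creeping from $B+w$ down to $B$ (transform $e^{-wc_2(s)}$, as above $b$ it moves as the spectrally positive $\xi_2$) and then passing from $B$ to $0$ (transform $\underline f_0^B(s)$). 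Hence
$$\underline f_0^x(s)=\underline V_x(s)+\overline V_x(c_2(s),s)\,\underline f_0^B(s).\qquad(\ast\ast)$$

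Next I would evaluate $(\ast)$ at $z=c_2(s)$ and solve the resulting $2\times2$ linear system in the two unknowns $\underline V_x(s)$ and $\overline V_x(c_2(s),s)$, obtaining
$$\underline V_x(s)=\frac{\underline f_0^x(s)-\overline f_x^B(c_2(s),s)\,\underline f_0^B(s)}{1-\overline f_0^B(c_2(s),s)\,\underline f_0^B(s)}.$$
To evaluate the right-hand side I would insert the explicit expressions of Theorem~\ref{topp1}, writing $\overline f_y^B(z,s)=\mathfrak K_{b-y}^{B-b}(z,s)-K_{b-y}^s(B-b)\,G(z)$ with $G(z)=\mathfrak F_{B-b}^s(z)/F_s(B-b)$ \emph{independent} of the starting point, together with $\underline f_0^x(s)=C_1^{b-x}(c_2(s),s)/C_1^{b}(c_2(s),s)$ and $\underline f_0^B(s)=e^{(b-B)c_2(s)}/C_1^{b}(c_2(s),s)$. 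The step I expect to be the main obstacle is the evaluation at $z=c_2(s)$, where the prefactor $k_2(z)-s$ vanishes while $\mathbb K_{b-y}^s(z)$ develops a compensating pole, so the limit must be taken carefully (with analytic continuation when $c_2(s)\le c_1(s)$). Using the defining relation (\ref{opp4}) together with the resolvent identity $(k_1(z)-s)\int_0^y e^{-uz}R_1^s(u)\,du=1-e^{-zy}C_1^y(z,s)$, I would derive the clean reduction $\mathfrak K_{b-y}^{B-b}(c_2(s),s)=e^{(B-b)c_2(s)}\,C_1^{b-y}(c_2(s),s)$.

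With this reduction the leading exponential terms cancel (since $e^{(B-b)c_2(s)}e^{(b-B)c_2(s)}=1$), and both the numerator and the denominator collapse to a common factor $G(c_2(s))\,e^{(b-B)c_2(s)}/C_1^{b}(c_2(s),s)$ multiplied by $K_{b-x}^s(B-b)$ and $K_b^s(B-b)$ respectively; the common factor $G(c_2(s))$ drops out without needing its explicit value, yielding $\underline V_x(s)=K_{b-x}^s(B-b)/K_b^s(B-b)$, the first formula in (\ref{opp15}). Finally I would substitute this into $(\ast)$: in the factorized form $\overline f_x^B(z,s)=\mathfrak K_{b-x}^{B-b}(z,s)-K_{b-x}^s(B-b)\,G(z)$ the $G(z)$-terms cancel, giving
$$\overline V_x(z,s)=\mathfrak K_{b-x}^{B-b}(z,s)-\frac{K_{b-x}^s(B-b)}{K_b^s(B-b)}\,\mathfrak K_b^{B-b}(z,s),$$
which is (\ref{opp16}). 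Setting $z=0$ and using $k_2(0)=0$, so that $\mathfrak K_x^u(0,s)=1+s\int_0^x R_1^s(u)\,du+s\int_0^u K_x^s(v)\,dv=\mathfrak K_x^u(s)$, produces $\overline V_x(s)$ and hence the second formula in (\ref{opp15}), completing the proof.
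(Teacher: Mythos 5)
Your proposal is correct and follows essentially the same route as the paper: the two renewal identities $(\ast)$, $(\ast\ast)$ are exactly the paper's system (\ref{opp19}), which the paper likewise specializes at $z=c_{2}(s)$, solves as a $2\times2$ linear system for $\underline V_{x}(s)$ and $\overline V_{x}(c_{2}(s),s)$ using the reduction $\mathfrak K_{b-y}^{B-b}(c_{2}(s),s)=e^{(B-b)c_{2}(s)}C_{1}^{b-y}(c_{2}(s),s)$, and then recovers $\overline V_{x}(z,s)$ from the second identity via the same cancellation of the $x$-independent term. The computations you outline (including the collapse of numerator and denominator to multiples of $K_{b-x}^{s}(B-b)$ and $K_{b}^{s}(B-b)$, and the $z=0$ specialization) all check out.
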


\begin{corollary}     \label{copp3}

Assume that   $ k_{1}(z)=k_{2}(z)=k(z). $
Then  the following  equalities are  valid:
\begin{align}                             \label{opp17}
&    \underline{V}_{x}(s)=
    \frac{R^{s}(B-x)}{R^{s}(B)},\qquad
    \overline{V}_{x}(s)=
    C^{B-x}(s)-
    \frac{R^{s}(B-x)}{R^{s}(B)}\:C^{B}(s), \\
&    \overline{V}_{x}(z,s)=
    C^{B-x}(z,s)-
    \frac{R^{s}(B-x)}{R^{s}(B)}\:C^{B}(z,s),
\end{align}
where  $ C^{x}(s)=1+s\int_{0}^{x}R^{s}(u)\,du, $
$$
   C^{x}(z,s)= e^{xz}\left(1-(k(z)-s)\int_{0}^{x}e^{-uz}R^{s}(u)\,du\right),
$$
$ R^{s}(x), $  $x\ge0 $
are the resolvents  of the processes a  $ \xi(t)=\xi_{i}(t); $
$ c(s)>0,$ $s>0$  is the unique root of the equation  $ k(z)=s $ in the
semi-plane  $\Re(z)>0. $
\end{corollary}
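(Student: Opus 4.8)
The plan is to specialize the formulas of Theorem \ref{topp2} to the case $k_{1}(z)=k_{2}(z)=k(z)$ by a direct computation. First I would record the two elementary consequences of this hypothesis: the roots coincide, $c_{1}(s)=c_{2}(s)=c(s)$, and the resolvents coincide, $R_{1}^{s}(x)=R_{2}^{s}(x)=R^{s}(x)$. Plugging these into the defining relation (\ref{opp4}), the ratio $(k_{1}(z)-s)/(k_{2}(z)-s)$ collapses to $1$, so that
$$
   \mathbb{K}_{x}^{s}(z)=\int_{0}^{\infty}e^{-uz}R^{s}(x+u)\,du,
$$
whence $K_{x}^{s}(u)=R^{s}(x+u)$ for every $x$. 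In particular $K_{b-x}^{s}(B-b)=R^{s}(B-x)$ and $K_{b}^{s}(B-b)=R^{s}(B)$, and the first identity $\underline{V}_{x}(s)=R^{s}(B-x)/R^{s}(B)$ is read off immediately from (\ref{opp15}).

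The core of the argument is the single identity $\mathfrak{K}_{x}^{u}(z,s)=C^{x+u}(z,s)$, valid for all $x\in\mathbb{R}$, $u\ge0$ once $k_{1}=k_{2}=k$. To establish it I would insert $K_{x}^{s}(v)=R^{s}(x+v)$ and $C_{1}^{x}(z,s)=C^{x}(z,s)$ into the definition of $\mathfrak{K}_{x}^{u}(z,s)$, then change variables $w=x+v$ in the integral $\int_{0}^{u}e^{-vz}R^{s}(x+v)\,dv$ to turn it into $e^{xz}\int_{x}^{x+u}e^{-wz}R^{s}(w)\,dw$. The two exponential prefactors combine to $e^{(x+u)z}$, and the resolvent integrals over $[0,x]$ and $[x,x+u]$ merge into one over $[0,x+u]$, which is exactly $C^{x+u}(z,s)$. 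Evaluating this identity with subscript $b-x$ and superscript $B-b$, and again with subscript $b$ and superscript $B-b$, gives $\mathfrak{K}_{b-x}^{B-b}(z,s)=C^{B-x}(z,s)$ and $\mathfrak{K}_{b}^{B-b}(z,s)=C^{B}(z,s)$; substituting these, together with $K_{b-x}^{s}(B-b)/K_{b}^{s}(B-b)=R^{s}(B-x)/R^{s}(B)$, into (\ref{opp16}) yields the claimed expression for $\overline{V}_{x}(z,s)$.

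Finally, the identities for the overshoot-free transforms $\overline{V}_{x}(s)$ follow by setting $z=0$. Here I would use that $k(0)=0$, so that $C^{x}(0,s)=1+s\int_{0}^{x}R^{s}(u)\,du=C^{x}(s)$, and that at $z=0$ the function $\mathfrak{K}_{x}^{u}(z,s)$ reduces to $\mathfrak{K}_{x}^{u}(s)=1+s\int_{0}^{x}R^{s}(u)\,du+s\int_{0}^{u}R^{s}(x+v)\,dv$; the same change of variables $w=x+v$ merges the two integrals into $s\int_{0}^{x+u}R^{s}(w)\,dw$, giving $\mathfrak{K}_{x}^{u}(s)=C^{x+u}(s)$. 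The second identity of (\ref{opp17}) then drops out of (\ref{opp15}).

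There is no genuine analytic obstacle here: the corollary is a specialization, and every step is a substitution followed by an elementary change of variable. The only point requiring care is the sign of $b-x$. When $x>b$ one has $b-x<0$, so that $C_{1}^{b-x}(z,s)=e^{(b-x)z}$ by the definition in Theorem \ref{topp1}, and the lower limit of the merged resolvent integral formally runs over negative arguments; since $R^{s}$ vanishes on $(-\infty,0)$ this contributes nothing and the identity $\mathfrak{K}_{b-x}^{B-b}(z,s)=C^{B-x}(z,s)$ persists. Tracking this case alongside $x\le b$ is the main bookkeeping to get right.
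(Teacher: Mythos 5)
Your proposal is correct and follows essentially the same route as the paper: the paper's own justification consists precisely of noting that when $k_{1}=k_{2}=k$ one has $K_{x}^{s}(u)=R^{s}(x+u)$, $\mathfrak{K}_{x}^{d_{2}}(z,s)=C^{x+d_{2}}(z,s)$ and $\mathfrak{K}_{x}^{d_{2}}(s)=1+s\int_{0}^{x+d_{2}}R^{s}(u)\,du$, and then substituting into (\ref{opp15})--(\ref{opp16}). You merely make explicit the change of variables and the sign bookkeeping for $b-x<0$ that the paper leaves implicit.
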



\begin{corollary}         \label{copp4}

Assume, that $\bold E\xi_{i}(1)=0, $
$\sigma_{i}^{2}=\bold E\xi_{i}(1)^{2}<\infty,$ $ x,b\in(0,1).$
Then  the following  expansions  hold  as  $ B\to\infty $
\begin{align*}
&   \bold E\left[e^{-s\chi_{xB}(bB)/B^{2}};\,\underline A\right]\to
    \frac{\sigma_{1}\sh((b-x)s_{1})
    \ch(\overline{b} s_{2})+ \sigma_{2}\sh(\overline{b}s_{2})\ch((b-x)s_{1})}
    {\sigma_{1}\sh(bs_{1})\ch(\overline{b}s_{2})+
    \sigma_{2}\sh(\overline{b}s_{2})\ch(bs_{1})},\\
&   \bold E\left[e^{-s\chi_{xB}(bB)/B^{2}};\,\overline A\right]\to
    \frac{\sigma_{1}\sh(x s_{1})}
    {\sigma_{1}\sh(bs_{1})\ch(\overline{b}s_{2})+
    \sigma_{2}\sh(\overline{b}s_{2})\ch(bs_{1})},\quad x\in(0,b]  ;
\end{align*}
\begin{align*}
&   \bold E\left[e^{-s\chi_{xB}(bB)/B^{2}};\,\overline{A}\right]\to
    \frac{\sigma_{1}\sh(b s_{1})\ch((x-b)s_{2})+
    \sigma_{2}\sh((x-b)s_{2})\ch(b s_{1})}
    {\sigma_{1}\sh(bs_{1})\ch(\overline{b}s_{2})+
    \sigma_{2}\sh(\overline{b}s_{2})\ch(bs_{1})},\\
&   \bold E\left[e^{-s\chi_{xB}(bB)/B^{2}};\,\underline{A}\right]\to
    \frac{\sigma_{2}\sh (1-x) s_{2}}
    {\sigma_{1}\sh(bs_{1})\ch(\overline{b}s_{2})+
    \sigma_{2}\sh(\overline{b}s_{2})\ch(bs_{1})},\quad x\in[b,1),
\end{align*}
where $s_{i}=\sqrt{2s}/\sigma_{i},$ $ i=1,2, $ $\overline{b}=1-b. $
\end{corollary}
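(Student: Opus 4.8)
The plan is to pass to the diffusion limit directly in the exact identities (\ref{opp15}) of Theorem \ref{topp2}, taken with the substitution $x\mapsto xB$, $b\mapsto bB$, interval $[0,B]$ and $s\mapsto s/B^{2}$. Since then $B-bB=\overline b B$, this is the origin of the arguments $\overline b\,s_{2}$ in the answer, and the whole problem reduces to the diffusion asymptotics of the resolvents $R_i^{s}$ and of the functions $K_x^{s}$, $\mathfrak K_x^{u}(s)$ built from them. First I would record the basic estimate: condition $(A)$ gives $k_i'(0)=-\bold E\xi_i(1)=0$ and $k_i''(0)=\sigma_i^{2}$, hence for each fixed $z$
$$
   k_i(z/B)-s/B^{2}=B^{-2}\Bigl(\tfrac{\sigma_i^{2}}{2}z^{2}-s\Bigr)+o(B^{-2}).
$$
By the continuity theorem for Laplace transforms this yields two facts: $Bc_i(s/B^{2})\to s_i=\sqrt{2s}/\sigma_i$ (as $c_i(s/B^{2})$ is the positive root of $k_i(\cdot)=s/B^{2}$), and, rescaling $x=yB$ in $\int_0^\infty e^{-zx}R_i^{s}(x)\,dx=(k_i(z)-s)^{-1}$, that $B^{-1}R_i^{s/B^{2}}(yB)\to\widetilde R_i(y):=\frac{2}{\sigma_i^{2}s_i}\sh(s_i y)$, the resolvent of $\sigma_i$-Brownian motion.

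Next I would transport this to the compound objects. From (\ref{opp4}) and the definition of $C_1^x$ one has the identity $\mathbb K_x^{s}(z)=C_1^{x}(z,s)/(k_2(z)-s)$; rescaling as above gives $C_1^{yB}(z/B,s/B^{2})\to\widetilde C_1^{y}(z,s):=e^{yz}\bigl(1-(\tfrac{\sigma_1^{2}}{2}z^{2}-s)\int_0^{y}e^{-wz}\widetilde R_1(w)\,dw\bigr)$ and hence $B^{-1}K_{yB}^{s/B^{2}}(uB)\to\widetilde K_y(u)$, whose transform in $u$ is $\widetilde C_1^{y}(z,s)/(\tfrac{\sigma_2^{2}}{2}z^{2}-s)$; for $y\le0$ this collapses to $\widetilde K_y(u)=\widetilde R_2(y+u)$, the analogue of the remark following (\ref{opp4}). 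In $\mathfrak K_x^{u}(s)=1+s\int_0^{x}R_1^{s}(t)\,dt+s\int_0^{u}K_x^{s}(v)\,dv$ the powers of $B$ cancel against $s/B^{2}$, so $\mathfrak K_{yB}^{\overline b B}(s/B^{2})\to\widetilde{\mathfrak K}_y^{\,\overline b}(s):=1+s\int_0^{y}\widetilde R_1(w)\,dw+s\int_0^{\overline b}\widetilde K_y(v)\,dv$. Substituting into (\ref{opp15}), the factor $B^{-1}$ cancels in the ratio $K_{(b-x)B}^{s/B^{2}}(\overline b B)/K_{bB}^{s/B^{2}}(\overline b B)$, and one gets
$$
  \bold E\bigl[e^{-s\chi_{xB}(bB)/B^{2}};\underline A\bigr]\to\frac{\widetilde K_{b-x}(\overline b)}{\widetilde K_b(\overline b)},\qquad
  \bold E\bigl[e^{-s\chi_{xB}(bB)/B^{2}};\overline A\bigr]\to\widetilde{\mathfrak K}_{b-x}^{\,\overline b}(s)-\frac{\widetilde K_{b-x}(\overline b)}{\widetilde K_b(\overline b)}\,\widetilde{\mathfrak K}_b^{\,\overline b}(s).
$$

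Finally I would perform the explicit inversions. Using the roots $\pm s_i$, inverting $\widetilde C_1^{y}(z,s)/(\tfrac{\sigma_2^{2}}{2}z^{2}-s)$ expresses $\widetilde K_y(u)$ as a combination of $\sh,\ch$ of $s_1y$ and $s_2u$; evaluating at $u=\overline b$ and computing $\widetilde{\mathfrak K}$ in the same way, the decisive simplification is the identity $\sigma_1 s_1=\sigma_2 s_2=\sqrt{2s}$, which collapses the mixed terms onto the common denominator $\sigma_1\sh(b s_1)\ch(\overline b s_2)+\sigma_2\sh(\overline b s_2)\ch(b s_1)$. It then remains to separate the regimes $x\in(0,b]$ (where $b-x\ge0$) and $x\in[b,1)$ (where $b-x\le0$, so $\widetilde K_{b-x}(u)=\widetilde R_2(b-x+u)$) and to match the four resulting ratios to the displayed formulas; the checks of continuity at $b$ and of the boundary values ($0$ at the lower, $1$ at the upper boundary) confirm the identifications.

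I expect the main obstacle to be twofold. The genuinely delicate analytic point is the interchange of limit and evaluation: the continuity theorem delivers convergence of the Laplace transforms, whereas (\ref{opp15}) requires the limits of $R_i^{s/B^{2}}(yB)$ and $K_{yB}^{s/B^{2}}(\overline b B)$ at the fixed points $y=b-x,b$ and $u=\overline b$, so one must upgrade to locally uniform convergence using the monotone, smooth structure of the rescaled scale functions. The laborious point is the last step: the explicit inversion of $\widetilde K$ and $\widetilde{\mathfrak K}$ and the $\sh/\ch$ algebra that, through $\sigma_1 s_1=\sigma_2 s_2$, reduces everything to the four stated expressions.
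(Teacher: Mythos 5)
Your proposal is correct and follows essentially the same route as the paper: the paper likewise passes to the limit in the exact formulas (\ref{opp15}) of Theorem \ref{topp2}, using the resolvent asymptotics (\ref{opp21}) and the resulting limits (\ref{opp23}), (\ref{opp24}) for $K_{xB}^{s/B^{2}}(uB)$ and $\mathfrak{K}_{xB}^{uB}(s/B^{2})$ obtained by Laplace inversion of the rescaled transform $\mathbb{K}_{xB}^{s/B^{2}}(z/B)$, and then carries out the same $\sh/\ch$ simplification over the common denominator. Your additional remarks (the identity $\mathbb{K}_{x}^{s}(z)=C_{1}^{x}(z,s)/(k_{2}(z)-s)$ and the need to upgrade transform convergence to pointwise convergence of the rescaled scale functions) are consistent with, and slightly more careful than, the paper's treatment, which simply cites the known limits.
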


\begin{proof}
It is worth  noting that  the joint distribution of $\{\chi,T\} $
was found  in   \cite{2Ka2} for L\'{e}vy  processes  of general form.
To determine the Laplace transforms of this distribution, the  authors
used the one-boundary characteristics  $\{\tau^{x},T^{x}\}, $ $\{\tau_{x},T_{x}\}$
of the process. Following this approach, we derive the system of linear integral
equations with respect to the Laplace transforms $ \underline{V}_{x}(s),$
$
V_{x}(du,s)= \bold E\left[e^{-s\chi_{x}(b)};
T(x)\in du,\overline A \right]
$
\begin{align}                                                \label{opp19}
&   \bold E\left[ e^{-s\underline\tau_{0}^{x}(b)}\right]
    = \underline{V}_{x}(s)+
    \int_{0}^{\infty} V_{x}(du,s)
    \bold E\left[ e^{-s\underline\tau_{0}^{u+B}(b)}\right],
    \quad x,b\in[0,B],\notag\\
&   \bold E\left[ e^{-s\overline\tau_{x}^{B}(b)};
    \overline T_{x}^{B}\in du\right]= V_{x}(du,s)+
    \underline{V}_{x}(s)
    \bold E\left[ e^{-s\overline\tau_{0}^{B}(b)};
    \overline T_{0}^{B}\in du\right].
\end{align}
The first equation of this system means that the process $\xi(x,t) $
can reach the lower boundary $0$ either on the sample paths which do not cross
the upper boundary $ B, $ or on the sample paths which do cross the upper
boundary and then pass the lower boundary. The second equation is written
analogously. Observe, that the mathematical expectations which enter the
equations of the system are determined by (\ref{opp6})-(\ref{opp8}). Taking
into  account  the formulae (\ref{opp6}),
(\ref{opp19}), we derive
\begin{align}                                                \label{opp20}
&   \frac{C_{1}^{b-x}(c_{2}(s),s)}{C_{1}^{b}(c_{2}(s),s)}
    = \underline{V}_{x}(s)+
    \frac{e^{-c_{2}(s)(B-b)}}{C_{1}^{b}(c_{2}(s),s)}\,
    \overline{V}_{x}(c_{2}(s),s),\notag\\
&   \overline{f}_{x}^{B}(c_{2}(s),s)= \overline{V}_{x}(c_{2}(s),s)+
    \underline{V}_{x}(s)\overline{f}_{0}^{B}(c_{2}(s),s).
\end{align}
Formula  (\ref{opp8}) implies that
\begin{align*}
   \overline{f}_{x}^{B}(c_{2}(s),s)= C_{1}^{b-x}(c_{2}(s),s)e^{c_{2}(s)(B-b)}-
   \frac{K_{b-x}^{s}(B-b)}{F_{s}(B-b)}\,\tilde{F}(s),\quad x\in[0,B],
\end{align*}
where
$
   \tilde{F}(s)=(k_{1}(c_{2}(s)-s)(c_{2}(s)-c_{1}(s))^{-1}e^{c_{2}(s)(B-b)}.
$
Solving system (\ref{opp20}) with respect to two unknown functions
$ \underline{V}_{x}(s), $ $ \overline{V}_{x}(c_{2}(s),s), $ we find for all
$ x\in[0,B]$ that
\begin{align*}
&    \underline{V}_{x}(s)=
    \frac{K_{b-x}^{s}(d_{2})}{K_{b}^{s}(d_{2})},\\
&    \overline{V}_{x}(c_{2}(s),s)= e^{c_{2}(s)d_{2}}
    \left( C_{1}^{b-x}(c_{2}(s),s)-
   \frac{K_{b-x}^{s}(d_{2})}{K_{b}^{s}(d_{2})}\,
    C_{1}^{b}(c_{2}(s),s)\right),
\end{align*}
where  $ d_{2}=B-b. $
It follows from the second equation from the  system (\ref{opp19})
and from (\ref{opp8}) that
\begin{align*}
   \overline{V}_{x}(z,s) = \overline{f}_{x}^{B}(z,s)-
    \frac{K_{b-x}^{s}(d_{2})}{K_{b}^{s}(d_{2})}\:\overline{f}_{0}^{B}(z,s)
   =\mathfrak{K}_{b-x}^{d_{2}}(z,s)-\frac{K_{b-x}^{s}(d_{2})}{K_{b}^{s}(d_{2})}\:
    \mathfrak{K}_{b}^{d_{2}}(z,s),
\end{align*}
where
$$
    \mathfrak{K}_{x}^{d_{2}}(z,s)=e^{zd_{2}}\left( C_{1}^{x}(z,s)-
    (k_{2}(z)-s)\int_{0}^{d_{2}}e^{-uz}K_{x}^{s}(u)\,du\right),
    \quad x\in\mathbb{R}.
$$
The second equality (\ref{opp15}) can be derived from (\ref{opp16}) for $ z=0.$
If $ k_{1}(z)=k_{2}(z)=k(z), $ then
$$
   \mathfrak{K}_{x}^{d_{2}}(z,s)=C^{x+d_{2}}(z,s),\quad
   \mathfrak{K}_{x}^{d_{2}}(s)=1+s\int_{0}^{x+d_{2}}R^{s}(u)\,du,
   \quad x\in\mathbb{R}.
$$
The formulae (\ref{opp15}), (\ref{opp16}) of Theorem \ref{topp2} imply the
statements of Corollary  \ref{copp3}.
\end{proof}

\section{Asymptotic  behavior  }

In this section we assume that the following conditions are fulfilled $ (A):$
$\bold E\xi_{i}(1)=0, $ $\sigma_{i}^{2}=\bold E\xi_{i}(1)^{2}<\infty,$
$ i=1,2. $ It is  a well-known
fact (see for instance \cite{Bor3}, 
\cite{Kr18}, \cite{Sh}) that
\begin{align}                                               \label{opp21}
   \lim_{B\to\infty}\frac{1}{B}\,R_{i}^{s/B^{2}}(xB)=
   \frac{2}{\sigma_{i}\sqrt{2s}}\,\sh (x^{+} s_{i}),
   \quad \lim_{B\to\infty}Bc_{i}(s/B^{2})=s_{i},
\end{align}
where $ s_{i}=\sqrt{2s}/\sigma_{i},$ $ i=1,2, $  $ x^{+}=\max\{0,x\}. $
We now verify the limiting relations for the functions which appear in Theorems
\ref{topp1}, \ref{topp2}. Observe, that under the condition $ (A) $ the
following expansion is valid as $ B\to\infty, $ $ z>0 $
$$
    k_{i}(z/B)=\frac{1}{2}\sigma_{i}^{2}z^{2}/B^{2}+ o(B^{-2}).
$$
Then in view of the definition of the function
$ \mathbb{K}_{x}^{s}(z) $ (\ref{opp4}) we can write
\begin{align}                                              \label{opp22}
&    \tilde k_{x}^{s}(z)=
    \lim_{B\to\infty}\frac{1}{B^{2}}\,
    \mathbb{K}_{xB}^{s/B^{2}}(z/B)=
    \frac{e^{xz}}{\frac{1}{2}\,\sigma_{2}^{2}z^{2}-s},
    \quad x\le0,\notag  \\
&    \tilde k_{x}^{s}(z)=
    \frac{1}{\frac{1}{2}\,
    \sigma_{2}^{2}z^{2}-s}\left(\frac{z\sigma_{1}}{\sqrt{2s}}\,\sh(s_{1}x)
    +\ch(s_{1}x)  \right), \quad x\ge0.
\end{align}
For $ \Re(z)>\sqrt{2s}/\sigma_{2} $ the right-hand sides of these equalities
are the Laplace transforms:
$$
    \tilde k_{x}^{s}(z)=\int_{0}^{\infty}e^{-uz} k_{x}^{s}(u)\,du,
    \quad \Re(z)>\sqrt{2s}/\sigma_{2}.
$$
The formulae (\ref{opp22}) imply the following relation
\begin{align}                                               \label{opp23}
&    \lim_{B\to\infty}\frac{1}{B}\,
    K_{xB}^{s/B^{2}}(uB)= k_{x}^{s}(u)
    = \frac{1}{2\pi i}\int\limits_{\gamma-i\infty}^{\gamma+i\infty}
    e^{zu}\tilde k_{x}^{s}(z)\,dz=\notag\\
&   = \left\{
   \begin{array}{l}
    \frac{2}{\sigma_{2}\sqrt{2s}}\,\sh(x+u)^{+}s_{2}, \quad x\le0 , \\
    {} \\
    \frac{2}{\sigma_{2}^{2}\sqrt{2s}}
    \left(\sigma_{1}\sh(x s_{1})\ch (u s_{2})+
    \sigma_{2}\sh(u s_{2})\ch(x s_{1})\right), \quad x\ge0,
   \end{array}
   \right.
\end{align}
where $ \gamma> \sqrt{2s}/\sigma_{2}. $
Taking into account the latter equality, we can easily obtain the limiting
relations for the functions, which enter the statements
of Theorems \ref{topp1}, \ref{topp2}:
\begin{align}                                           \label{opp24}
   \lim_{B\to\infty}
    \mathfrak{K}_{xB}^{uB}(s/B^{2})
   = \left\{
   \begin{array}{l}
    \ch(x+u)^{+}s_{2}, \quad x\le0 , \\
    {} \\
    \frac{\sigma_{1}}{\sigma_{2}}\,\sh(xs_{1})\sh (u s_{2} )+
    \ch(u s_{2})\ch(xs_{1}), \quad x\ge0,
   \end{array}
   \right.
\end{align}
\begin{align}                                           \label{opp25}
   \lim_{B\to\infty}
    F_{s/B^{2}}(uB) =\frac{\sigma_{1}}{\sigma_{2}^{2}}\,
    (\sigma_{1}\ch(us_{2})+ \sigma_{2}\sh(us_{2}) ),\quad u\ge 0,
\end{align}
\begin{align}                                           \label{opp26}
   \lim_{B\to\infty}
    C_{1}^{xB}\left(c_{2}(s/B^{2}),s/B^{2}\right)
   = \left\{
   \begin{array}{l}
    e^{xs_{2}}, \quad x\le0 , \\
    {} \\
    \frac{\sigma_{1}}{\sigma_{2}}\,\sh(xs_{1})+ \ch(xs_{1}), \quad x\ge0.
   \end{array}
   \right.
\end{align}
We now verify the limiting equalities of Corollary  \ref{copp2}.
Let $k\ge\max\{x,b\},$  $x\le b. $  Then taking into  account  (\ref{opp7}),
(\ref{opp21}), (\ref{opp23}) and  (\ref{opp25}),  we have as  $B\to\infty $
\begin{align*}
&   \bold E\left[ e^{-s\overline\tau^{kB}_{xB}(bB)/B^{2}} \right]=
    \overline{f}_{xB}^{kB}(s/B^{2})\to
    \frac{\sigma_{1}}{\sigma_{2}}\,\sh(b-x)s_{1}\ch(d_{2}s_{2})+
    \ch(x-b)s_{1}\ch(d_{2}s_{2})\\
&   - \frac{\sigma_{1}\sh(d_{2}s_{2})+\sigma_{2}\ch(d_{2}s_{2})}
    {\sigma_{1}\ch(d_{2}s_{2})+\sigma_{2}\sh(d_{2}s_{2})}
    \left(\frac{\sigma_{1}}{\sigma_{2}}\,\sh((b-x)s_{1})\ch(d_{2}s_{2})+
    \ch((b-x)s_{1})\sh (d_{2}s_{2})\right)=\\
&   =\frac{\sigma_{1}e^{-(b-x)s_{1}}}
    {\sigma_{1}\ch((k-b)s_{2})+ \sigma_{2}\sh((k-b)s_{2})}, \qquad x\le b,
\end{align*}
where $ d_{2}=k-r. $  Similarly, we can derive the second formula of Corollary
\ref{copp2}:
\begin{align*}
&   \lim_{B\to\infty}
    \bold E\left[ e^{-s\overline\tau^{kB}_{xB}(bB)/B^{2}} \right]=
    \frac{\sigma_{1}\ch((x-b)s_{2})+\sigma_{2}\sh((x-b)s_{2})}
    {\sigma_{1}\ch((k-b)s_{2})+\sigma_{2}\sh((k-b)s_{2})}, \qquad x\in[b,k].
\end{align*}
Let $ r\le\min\{x,b\}.$  Then the following relation follows from (\ref{opp6})
and (\ref{opp26}) as  $B\to\infty $
\begin{align*}
   \bold E\left[ e^{-s\underline\tau^{xB}_{rB}(bB)/B^{2}} \right]&=
    \underline{f}_{rB}^{xB}(s/B^{2})=
    \frac{C_{1}^{(b-x)B}\left(c_{2}(s/B^{2}),s/B^{2}\right)}
    {C_{1}^{(b-r)B}\left(c_{2}(s/B^{2}),s/B^{2}\right)}\to \\
&   \to \frac{\sigma_{2}e^{(b-x)s_{2}}}
    {\sigma_{1}\sh((b-r)s_{1})+\sigma_{2}\ch((b-r)s_{1})}, \qquad x\ge b,
\end{align*}
\begin{align*}
   \bold E\left[ e^{-s\underline\tau^{xB}_{rB}(bB)/B^{2}} \right]
   \to \frac{\sigma_{1}\sh((b-x)s_{1})+\sigma_{2}\ch((b-x)s_{1})}
    {\sigma_{1}\sh((b-r)s_{1})+\sigma_{2}\ch((b-r)s_{1})}, \quad x\in[r,b].
\end{align*}
We now derive \ref{copp4}. The following relation follows from the first
formula of (\ref{opp15}) and from (\ref{opp23}) for $ x\in(0,b], $ as
$B\to\infty $
\begin{align*}
   \bold E & \left[e^{-s\chi_{xB}(bB)/B^{2}};\,\underline A\right]=
    \underline{V}_{xB}(s/B^{2})=
    \frac{K_{(b-x)B}^{s/B^{2}}(\overline{b}B)}
    {K_{bB}^{s/B^{2}}(\overline{b}B)}\to\\
&   \to \frac{\sigma_{1}\sh((b-x)s_{1})
    \ch(\overline{b}s_{2})+\sigma_{2}\sh(\overline{b}s_{2})\ch((b-x)s_{1})}
    {\sigma_{1}\sh(bs_{1})\ch(\overline{b}s_{2})+
    \sigma_{2}\sh(\overline{b}s_{2})\ch(bs_{1})},\quad x\in(0,b].
\end{align*}
where $ \overline{b}=1-b. $ Taking into account the second formula of
(\ref{opp15}) and (\ref{opp23}), (\ref{opp24}) we can write for $ x\in(0,b], $
as $ B\to\infty $
\begin{align*}
&   \bold E \left[e^{-s\chi_{xB}(bB)/B^{2}};\,\overline A\right]=
    \overline{V}_{xB}(s/B^{2})=
    \mathfrak{K}_{(b-x)B}^{\overline{b}B}(s/B^{2})-\\
&   -\frac{K_{(b-x)B}^{s/B^{2}}(\overline{b}B)}
    {K_{bB}^{s/B^{2}}(\overline{b}B)}\:
    \mathfrak{K}_{bB}^{\overline{b}B}(s/B^{2})\to
    \frac{\sigma_{1}\sh(x s_{1})}
    {\sigma_{1}\sh(bs_{1})\ch(\overline{b}s_{2})+
    \sigma_{2}\sh(\overline{b}s_{2})\ch(bs_{1})},\quad x\in(0,b].
\end{align*}
Analogously, the formulae of the corollary can be verified for $ x\in[b,1). $

\section {Appendix }
Let  $ \xi(t)\in\mathbb{R}, $ $ \xi(0)=0, $
$\bold E e^{-p\xi(t)}=e^{tk(p)},$ $\Re(p)=0$ be  a   general  L\'{e}vy process.  Denote  by
$$
    \xi_{t}^{+}=\sup_{u\le t}\xi(u),\qquad
    \xi_{t}^{-}=\inf_{u\le t}\xi(u)
$$
the running  supremum and infimum of the process.  For   $ x\ge0 $ define
$$
    \tau_{x}^{+}=\inf\{t>0:\xi(t)\ge x \},\qquad T_{x}^{+}=\xi(\tau_{x}^{+})-x
$$
the first crossing  time of a barrier  x and  the value  of the overshoot. Then  the following relation is valid
(\cite{PeRog}):
\begin{align}                                    \label{opp27}
   \int_{0}^{\infty}e^{-px}
   \bold E \left[e^{-s\tau_{x}^{+}-zT_{x}^{+}}\right]dx=
   \frac{1}{p-z}\left(1-\frac{\bold E e^{-p\xi_{\nu_s}^{+}}}
   {\bold E e^{-z\xi_{\nu_s}^{+}} }\right),\qquad \Re(p),\Re(z)\ge0,
\end{align}
where $\nu_{s}\sim\exp(s),$ $ s>0 $ is an exponential  random  variable  
independent  from the process $\xi(t).$
For  a   spectrally  positive   L\'{e}vy 
process with  Laplace exponent 
\begin{align}                                                 \label{opp28}
   k(z)=az+\frac{\sigma^{2} z^{2}}{2}+\int_{0}^{\infty}
   \left( e^{-zx}-1 +z \bold I_{\{0<x\le1\}}\right)\Pi(dx),
   \quad i=1,2
\end{align}
we have
$$
    \bold E e^{-z\xi_{\nu_s}^{+}}=\frac{s}{c(s)}\,\frac{p-c(s)}{k(p)-s},\quad
    \Re(p)\ge0,
$$
where $ c(s)>0,$ $ s>0 $ is the unique root of the equation  $k(z)-s=0,$
in the semi-plane $\Re(z)>0. $  It follows   from   the latter   relation and  
from (\ref {opp27})  that
\begin{align}                            \label{opp29}
   \int_{0}^{\infty}e^{-px}
   \bold E \left[e^{-s\tau_{x}^{+}-zT_{x}^{+}}\right]dx=
   \frac1{p-z}\left(1-\frac{p-c(s)}{k(p)-s}
   \;\frac{k(z)-s}{z-c(s)}\right).
\end{align}
Introduce the resolvents $ R^{s}(x),$ $x\ge0 $  \cite{SuSh} of the
spectrally one-sided  L\'{e}vy  process
$ \xi(t), $  $ t\ge0, $  by means of their Laplace transforms:
$$
    \int_{0}^{\infty}e^{-xz}R^{s}(x)\,dx = (k(z)-s)^{-1},
    \quad \Re(z)>c(s),\quad R^{s}(x)=0, \; x<0,
$$
Making use of the definition  of the  resolvent and inverting the Laplace 
transform with  respect  to $ p $ $ (\Re(p)>c(s))$  in both  sided of  
(\ref {opp29}),  we find
\begin{align*}
    \bold E \left[e^{-s\tau_{x}^{+}-zT_{x}^{+}}\right]=
    e^{xz}-R^{s}(x)\,\frac{k(z)-s}{z-c(s)}
    -(k(z)-s)e^{xz}\int_{0}^{x}e^{-uz}R^{s}(u)\,du,
\end{align*}
which is the  second  equality of  Lemma  1.

\end{document}